\newtheorem{thm}{Theorem}[section]
\newtheorem{cor}[thm]{Corollary}
\newtheorem{lem}[thm]{Lemma}
\theoremstyle{definition}
\newtheorem{defin}[thm]{Definition}
\newtheorem{rem}[thm]{Remark}
\numberwithin{equation}{section}
 \newcommand{\setN}{\mathbb{N}}
\begin{document}

\baselineskip=17pt

\title[]
{On the downward L\"owenheim-Skolem Theorem for elementary submodels}

\author{Matthias Kunik}
\address{Universit\"{a}t Magdeburg\\
IAN \\
Geb\"{a}ude 02 \\
Universit\"{a}tsplatz 2 \\
D-39106 Magdeburg \\
Germany}
\email{matthias.kunik@ovgu.de}

\date{\today}
\maketitle

\begin{abstract}
We introduce a new definition of a model for a formal mathematical system.
The definition is based upon the substitution in the formal systems,
which allows a purely algebraic approach to model theory.
This is very suitable for applications due to a general syntax used in the formal systems. For our models we present a new proof of the downward L\"owenheim-Skolem Theorem for elementary submodels. 
\end{abstract}

{\bf Keywords:} Formal mathematical systems, model theory.\\

Mathematics Subject Classification: 03C07, 03F03, 03C55\\

\section{Introduction}\label{intro}
%
%*************************************************************** 
%***  Bitte die Einleitung zunaechst in Deutsch formulieren! ***
%***************************************************************
%
We present a new definition of a model for the formal mathematical systems
from \cite{Ku}. This definition is based upon the substitution in the formal systems, and it allows a very flexible use for applications
due to the general syntax introduced in \cite{Ku}. 
For these models we obtain a simple new proof of the downward L\"owenheim-Skolem Theorem for elementary submodels using the construction of Henkin-constants. 

In \cite{Ku} we have presented a unified theory for formal systems
inclu\-ding recursive systems closely related to formal grammars, including 
the predi\-cate calculus as well as a formal induction principle.
In \cite[(3.13) Induction Rule (e)]{Ku} we use an own rule of inference for formal induction and special predicate symbols for the underlying recursive systems. The remaining rules of inference \cite[(3.13) (a)-(d)]{Ku} without induction and recursive systems provide an own
Hilbert-style predicate calculus which is similar
to that of Shoenfield in his textbook \cite{Shoenfield}.
The corresponding theory is presented in Section \ref{FMS},
and it is used for the definition of the models
in Section \ref{models}. 
The main result in Section \ref{models} is Lemma \ref{correct}
which states the correctness of the logic calculus, but also
its Corollary \ref{contra_free}. Here the theory of models
is only presented for formal mathematical systems without 
using the Induction Rule from \cite[(3.13)(e)]{Ku}.
In order to see that this is sufficient, we start with a general
formal mathematical system $[M;\mathcal{L}]$, where $M=[S;A;P;B]$
with the underlying recursive system $S=[A_S;P_S;B_S]$. 
Here $A$ is the set of constant- or operation symbols, $P$ the set of pre\-di\-cate symbols and $B$ the set of basis axioms, i.e. the axioms of the given theory,
and we have $A_S \subseteq A$, $P_S \subseteq P$ and $B_S \subseteq B$. 
Then we define
\begin{align*}
N &=[S;A;P;B_S]\,,~ S_{\emptyset}=[\,[\,];[\,];[\,]\,]\,,\\
\Phi &=\{F\,:\,F \mbox{~is~a~provable~closed~formula~of~} [N;\mathcal{L}]\,\}\,,\\
\tilde{M} &= [S_{\emptyset};A;P;B \cup \Phi\,]\,.
\end{align*}
The Induction Rule is only used in $[N;\mathcal{L}]$ in order to generate the axioms in $\Phi$ for $[\tilde{M};\mathcal{L}]$, but can no longer be used in
$[\tilde{M};\mathcal{L}]$. It follows from \cite[(5.1) Proposition]{Ku}
that $[N;\mathcal{L}]$ is free from contradiction.
Note that
$[\tilde{M};\mathcal{L}]$ and $[M;\mathcal{L}]$ both have the same formulas.
Moreover, it is not difficult to obtain from the Deduction Theorem 
\cite[(4.5)]{Ku} that $[\tilde{M};\mathcal{L}]$ and $[M;\mathcal{L}]$
both have the same \textit{provable} formulas.
Hence we will only use formal systems $[M;\mathcal{L}]$
with $M=[S;A;P;B]$ and $S=[\,[\,];[\,];[\,]\,]$, and for these
systems we will shortly write $M=[A;P;B]$.
In Section \ref{LS} we shall formulate the L\"owenheim-Skolem Theorem
\ref{DLST} for our general theory of models
and present a new proof of that theorem. 
In Section \ref{set_theory} we apply our results
to the reduced axiomatic set theory from \cite{Kuset}.

\section{Formal mathematical systems}\label{FMS}
We use notations and results from \cite[Sections 3,4]{Ku}
and from Shoenfield's textbook \cite{Shoenfield}.
In \cite[Section 3]{Ku} a recursive system $S$ closely related 
to Smullyan's elementary formal systems in \cite{Sm} is embedded into a 
so called \textit{formal mathe\-matical system} $M$. 
In \cite[(3.13)]{Ku} we use five rules of inference, namely rules (a)-(e).
Rule (e) enables formal induction with respect to the recursively enumerable relations generated by the underlying recursive system $S$.
For the theory of models we put $S=S_{\emptyset}=[\,[\,];[\,];[\,]\,]$
in order to avoid the use of rule (e). 
Then we can shortly write $M=[A; P; B]$
instead of $M=[S_{\emptyset}; A; P ;B]$
for our formal systems. 
We denote the countably infinite alphabet of variables by
$X=\{\,{\bf x_1}\,,\,{\bf x_2}\,,\,{\bf x_3}\,,\, \ldots \,\}$.

\begin{defin} {\bf Formal mathematical systems}\\ 
%with restricted argument lists}\\
Let $M = [A; P; B]$ be a formal mathematical system and ${\mathcal L}$
a given subset of $A$-lists with the properties 
\begin{itemize}
\item[(i)] $X \subseteq {\mathcal L} $\,,
\item[(ii)] $\lambda \frac{\mu}{x} \in {\mathcal L}$ ~
for all $\lambda, \mu \in {\mathcal L}$, $x \in X$\,,
\item[(iii)] all formulas in $B$ contain only argument lists in ${\mathcal L}$\,.
\end{itemize}
Then $[M;{\mathcal L}]$ is also called a formal
mathematical system (with restricted argument lists ${\mathcal L}$).
A formula in $[M;{\mathcal L}]$ is a formula in $M$ 
which has only argument lists in ${\mathcal L}$\,.
A proof $[\Lambda]$ in $[M;{\mathcal L}]$ is a proof in $M$
with the restrictions 
\begin{itemize}
\item[(iv)] the formulas in $[\Lambda]$ and 
the formulas $F$ and $G$ in \cite[(3.13)(a)-(d)]{Ku}
contain only argument lists in ${\mathcal L}$\,,
\item[(v)] there holds $\lambda \in {\mathcal L}$ 
for the list $\lambda$ in \cite[(3.13)(c)]{Ku}\,.
\end{itemize}
Let $\Pi(M;\mathcal{L})$ be the set of formulas provable
in $[M;\mathcal{L}]$ by using only the rules of inference (a),(b),(c),(d)
from \cite[(3.13)]{Ku}.
\end{defin}
\begin{defin}\label{henkin_def}
Let $[M;\mathcal{L}]$ be a formal mathematical system.
\begin{itemize}
\item[(a)] $[M;\mathcal{L}]$ is called \textit{contradictory} if 
$F \in \Pi(M;\mathcal{L})$ for every formula $F$ in $[M;\mathcal{L}]$,
otherwise we say that $[M;\mathcal{L}]$ is \textit{consistent}.
\item[(b)] $[M;\mathcal{L}]$ is called a \textit{Henkin system}
if for every variable $x \in X$ and for every formula $F$ in $[M;\mathcal{L}]$ 
with $\mbox{free}(F) \subseteq \{x\}$ there is a constant $c \in \mathcal{L}$
such that
$\begin{displaystyle}
\to \exists x F \, F\frac{c}{x} \,\in \, \Pi(M;\mathcal{L})\,.
\end{displaystyle}$
We obtain from \cite[(3.19)]{Ku} that the latter condition may be replaced with
\begin{align*}
\leftrightarrow \exists x F \, F\frac{c}{x} \,\in \, \Pi(M;\mathcal{L})\,.
\end{align*}
\item[(c)] $[M;\mathcal{L}]$ is called \textit{complete} if 
$$F \notin \Pi(M;\mathcal{L}) \Leftrightarrow \neg F \in \Pi(M;\mathcal{L}) $$
for every closed formula $F$ in $[M;\mathcal{L}]$\,.
\end{itemize}
\end{defin}
\begin{defin}\label{extensions}
Given are two formal mathematical systems $[M;\mathcal{L}]$ and 
$[M';\mathcal{L}']$ with $M=[A; P; B]$ and $M'=[A'; P'; B']$.
\begin{itemize}
\item[(a)] We say that $[M';\mathcal{L}']$ is an \textit{extension} 
of $[M;\mathcal{L}]$ if
$$
A \subseteq A'\,, \quad 
P \subseteq P'\,, \quad 
\mathcal{L} \subseteq \mathcal{L}' ~\textrm{~and~} ~
\Pi(M;\mathcal{L}) \subseteq \Pi(M';\mathcal{L}') \,.
$$
\item[(b)] Let $[M';\mathcal{L}']$ be an extension of 
$[M;\mathcal{L}]$. If we have in addition
$$
F \in \Pi(M';\mathcal{L}') \implies F \in \Pi(M;\mathcal{L})
$$
for all formulas $F$ in $[M;\mathcal{L}]$, then 
$[M';\mathcal{L}']$ is called a \textit{conservative extension} of 
$[M;\mathcal{L}]$. 
\end{itemize}
\end{defin}

\section{Models of formal mathematical systems}\label{models}
Let $[M;\mathcal{L}]$ be a formal mathematical system  with $M=[A;P;B]$.\\
\textit{A model} $\mathcal{D}$ of $[M;\mathcal{L}]$  
consists of the following ingredients:
\begin{itemize}
\item[(1)] We have a nonempty set $\mathcal{D}_*$, also called the universe of the model. The members $d \in \mathcal{D}_*$ are called the individuals of the universe.
\item[(2)] For each individual $d \in \mathcal{D}_*$ 
we have exactly one name $\alpha_d$ and the set of names
$
\mathcal{N}=\{\alpha_d\,:\,d \in \mathcal{D}_*\,\}\,.
$
It is understood that different individuals have different names and that the names in $\mathcal{N}$ are different from the symbols in $[M;\mathcal{L}]$.
\item[(3)] Put $\hat{A} = A \cup \mathcal{N}$,
$\hat{M}=[\hat{A}; P; B]$ and
\begin{align*} \hat{{\mathcal L}}   = 
\{\, & \lambda \frac{\kappa_1}{x_1}...\frac{\kappa_m}{x_m} \,\,:\,\, 
\lambda \in {\mathcal L},\,
x_1,...,x_m \in X,\,\\ & \kappa_1,...,\kappa_m \in 
\mathcal{N}\,,\, m \geq 0\}\,.
\end{align*}
There results an extended mathematical system 
$[\hat{M};\hat{\mathcal{L}}]$.
Let $\hat{\mathcal{L}}_*$ be the set of all lists $\lambda \in \hat{\mathcal{L}}$
without variables, i.e. we have $\mbox{var}(\lambda)= \emptyset$.
\item[(4)] We have a \textit{surjective} function 
$\mathcal{D} : \hat{\mathcal{L}}_* \to \mathcal{D}_*$ with
\begin{equation*}
\mathcal{D}\left( \lambda \frac{\mu}{x}\right)=
\mathcal{D}\left( \lambda \frac{\alpha_{\mathcal{D}(\mu)}}{x}\right)
\end{equation*}
for all $\mu \in \hat{\mathcal{L}}_*$, for all variables $x \in X$
and for all $\lambda \in \hat{\mathcal{L}}$ with 
$\mbox{var}(\lambda) \subseteq \{ x \}$.
\item[(5)]To each predicate symbol $p \in P$ 
and each $n \in \setN_0=\{0,1,2,3,\ldots\}$ we assign an $n$-ary predicate $p_n \subseteq \mathcal{D}_*^n$. Especially for $n=0$ we either have a truth value $p_0=\top$ or $p_0 = \bot$.
\item[(6)] We have an extension of the function $\mathcal{D}$, which assigns a truth value to each closed formula of $[\hat{M};\hat{\mathcal{L}}]$.
This extension is also denoted by $\mathcal{D}$ and is given by
\begin{itemize}
\item[6.1] $\begin{displaystyle}
\mathcal{D}(\sim \lambda,\mu)=\top \Leftrightarrow
\mathcal{D}(\lambda)=\mathcal{D}(\mu)
\end{displaystyle}$ 
for all $\lambda, \mu \in \hat{\mathcal{L}}_*$,
\item[6.2] $\mathcal{D}(p)=p_0 \in \{\top ,\bot\}$ for all $p \in P$,
$\begin{displaystyle}
\mathcal{D}(p \, \lambda_1,\ldots,\lambda_n)=\top \Leftrightarrow
(\mathcal{D}(\lambda_1),\ldots,\mathcal{D}(\lambda_n)) \in p_n
\end{displaystyle}$ for all $n \in \setN=\{1,2,3,\ldots\}$ and all
$\lambda_1,\ldots,\lambda_n \in \hat{\mathcal{L}}_*$\,.
\item[6.3] We have for all closed formulas $F,G \mbox{~of~} [\hat{M};\hat{\mathcal{L}}]$:\\
$\begin{displaystyle}
\mathcal{D}(\neg F)=\top \Leftrightarrow
\mathcal{D}(F)=\bot\,,
\end{displaystyle}$ \\
$\begin{displaystyle}
\mathcal{D}(\to F G)=\top \Leftrightarrow
(\mathcal{D}(F) \Rightarrow \mathcal{D}(G))\,,
\end{displaystyle}$ \\
$\begin{displaystyle}
\mathcal{D}(\leftrightarrow F G)=\top \Leftrightarrow
(\mathcal{D}(F) \Leftrightarrow \mathcal{D}(G))\,,
\end{displaystyle}$ \\
$\begin{displaystyle}
\mathcal{D}(\& \, F G)=\top \Leftrightarrow
(\mathcal{D}(F) ~\mbox{and}~ \mathcal{D}(G))\,,
\end{displaystyle}$ \\
$\begin{displaystyle}
\mathcal{D}(\vee \, F G)=\top \Leftrightarrow
(\mathcal{D}(F) ~\mbox{or}~ \mathcal{D}(G))\,.
\end{displaystyle}$ 
\item[6.4] We have for all $x \in X$ and all formulas 
$F \mbox{~of~}  [\hat{M};\hat{\mathcal{L}}]$ with $\mbox{free}(F) \subseteq \{x\}$:\\
$\begin{displaystyle}
\mathcal{D}(\forall x \, F)=\top \Leftrightarrow
\left( \mathcal{D}\left(F \frac{\lambda}{x}\right)=\top ~\mbox{for all}~
\lambda \in \hat{\mathcal{L}}_* \right) \,,
\end{displaystyle}$ \\
$\begin{displaystyle}
\mathcal{D}(\exists x \, F)=\top 
\Leftrightarrow
\left(~\mbox{there exists}~\lambda \in \hat{\mathcal{L}}_* 
~\mbox{with}~
\mathcal{D}\left(F \frac{\lambda}{x}\right)=\top
\right)\,.
\end{displaystyle}$ 
\end{itemize}
\item[(7)] Let $F$ be a formula in 
$[\hat{M};\hat{\mathcal{L}}]$ with 
$\mbox{free}(F) = \{x_1,\ldots,x_m\}$, $x_1,\ldots,x_m \in X$ 
and $m \geq 0$. We say that $F$ is valid in $\mathcal{D}$ iff
$\begin{displaystyle}
\mathcal{D}\left(
F \frac{\lambda_1}{x_1}\ldots\frac{\lambda_m}{x_m}
\right)=\top
\end{displaystyle}$
for all $\lambda_1,\ldots,\lambda_m \in \hat{\mathcal{L}}_* $.
Note that this simply means 
$\begin{displaystyle}
\mathcal{D}\left(
\forall x_1 \ldots \forall x_m F 
\right)=\top
\end{displaystyle}$\,.
\item[(8)] We require that every formula $F \in B$ is valid in 
$\mathcal{D}$. Then we say that $\mathcal{D}$ is a model
for $[M;\mathcal{L}]$.
\end{itemize}
\begin{rem} 
(a) We say that an extended mapping $\mathcal{D}$ which satisfies
only Conditions (1)-(7) is a \textit{structure} for the formal mathematical system 
$[M;\mathcal{L}]$. Finally, Condition (8) makes it a \textit{model} for
$[M;\mathcal{L}]$. 

(b) We use a Hilbert-style calculus for our formal mathematical systems. Hence we can use the substitution rule (c) and formulas with free variables in our axioms.\\
Let $F$ be a formula in $[\hat{\mbox{M}};\hat{\mathcal{L}}]$ and $x \in X$.
Then $F \in \Pi(\hat{\mbox{M}};\hat{\mathcal{L}})$ iff
$\forall x\, F \in \Pi(\hat{\mbox{M}};\hat{\mathcal{L}})$
from \cite[(3.11)(a),(3.13)(b)(d)]{Ku}. This matches well with Condition (7) in our definition of the models.
\end{rem}

\begin{defin}\label{degree}{\bf Degree of a formula}\\
For all formulas $F$, $G$ of a formal mathematical system $[M;\mathcal{L}]$ 
we define their degree $\mbox{deg}(F)$, $\mbox{deg}(G)$ as follows: 
\begin{itemize}
\item If $F$ is a prime formula then $\mbox{deg}(F)=0$\,,
\item $\mbox{deg}(\neg F)=\mbox{deg}(F)+1$\,,
\item $\mbox{deg}(J F G)=\max(\mbox{deg}(F),\mbox{deg}(G))+1$
for $J \in \{\to,\vee,\&,\leftrightarrow\}$\,,
\item $\mbox{deg}(\forall x F)=\mbox{deg}(\exists x F)=\mbox{deg}(F)+1$
for any $x \in X$\,.
\end{itemize}
\end{defin}

\begin{lem}\label{structure_lemma}
Let $\mathcal{D}$ be a structure for $[M;\mathcal{L}]$. Then we have:
\begin{itemize}
\item[(a)] $d = \mathcal{D}\left( \alpha_d \right)$ for all $d \in \mathcal{D}_*$.
\item[(b)]For $x \in X$ and every formula $H$ in $[\hat{M};\hat{\mathcal{L}}]$
with $\mbox{free}(H) \subseteq \{x\}$ and for all $\mu \in \hat{\mathcal{L}}_*$
we have
\begin{equation*}
\mathcal{D}\left( H \frac{\mu}{x} \right) = 
\mathcal{D}\left( H \frac{\alpha_{\mathcal{D}(\mu)}}{x} \right)\,.
\end{equation*}
\end{itemize}
\end{lem}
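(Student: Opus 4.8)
The plan is to treat the two parts separately. Part (a) I would obtain directly from the defining identity in Condition (4): applying it to the one-element list $\lambda = x$, which lies in $\hat{\mathcal{L}}$ by (i) and satisfies $\mbox{var}(x) = \{x\}$, and using $x\frac{\mu}{x} = \mu$ together with $x\frac{\alpha_{\mathcal{D}(\mu)}}{x} = \alpha_{\mathcal{D}(\mu)}$, yields $\mathcal{D}(\mu) = \mathcal{D}(\alpha_{\mathcal{D}(\mu)})$ for every $\mu \in \hat{\mathcal{L}}_*$. Given $d \in \mathcal{D}_*$, the surjectivity of $\mathcal{D}$ furnishes some $\mu$ with $\mathcal{D}(\mu) = d$, and plugging this in gives $d = \mathcal{D}(\mu) = \mathcal{D}(\alpha_{\mathcal{D}(\mu)}) = \mathcal{D}(\alpha_d)$, as required.

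For part (b) I would argue by induction on $\mbox{deg}(H)$ from Definition \ref{degree}, the statement to be established being that the asserted equality holds for every $x \in X$, every $\mu \in \hat{\mathcal{L}}_*$, and every $H$ with $\mbox{free}(H) \subseteq \{x\}$ of the given degree. In the base case $H$ is prime, hence of the form $\sim \lambda_1, \lambda_2$ or $p\,\lambda_1,\ldots,\lambda_n$ with each $\lambda_i \in \hat{\mathcal{L}}$ and $\mbox{var}(\lambda_i) \subseteq \{x\}$ (as these are exactly the free variables). Unfolding Conditions 6.1 and 6.2 reduces both truth values to statements about $\mathcal{D}(\lambda_i\frac{\mu}{x})$ and $\mathcal{D}(\lambda_i\frac{\alpha_{\mathcal{D}(\mu)}}{x})$, and these agree termwise by Condition (4); the constant prime formula $p$ (case $n=0$) is untouched by substitution, so the claim is immediate there.

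The inductive step splits on the outermost symbol of $H$. For $H = \neg F$ and $H = J F G$ with $J \in \{\to,\vee,\&,\leftrightarrow\}$ the constituents $F$, $G$ have smaller degree and free variables contained in $\{x\}$, so I would apply the induction hypothesis to each and recombine the resulting truth values through the matching clause of Condition 6.3; this is routine. The genuine obstacle, which I expect to be the crux, is the quantifier case $H = \forall y F$ (and symmetrically $H = \exists y F$). If $y = x$ then $x$ is bound, $H$ is closed, and both substitutions act trivially. If $y \neq x$, then since every $\mu, \lambda \in \hat{\mathcal{L}}_*$ is variable-free there is no capture, and I would first use Condition 6.4 to rewrite $\mathcal{D}(H\frac{\mu}{x})$ as the assertion that $\mathcal{D}\bigl((F\frac{\mu}{x})\frac{\lambda}{y}\bigr) = \top$ for all $\lambda \in \hat{\mathcal{L}}_*$.

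The key manoeuvre is then to apply the induction hypothesis not to $F$ itself but to the auxiliary formula $G := F\frac{\lambda}{y}$: because $\lambda$ carries no variables, $G$ satisfies $\mbox{free}(G) \subseteq \{x\}$ and $\mbox{deg}(G) = \mbox{deg}(F) < \mbox{deg}(H)$, so the induction hypothesis applies to $G$ with the substitution $\frac{\mu}{x}$. Commuting the two substitutions, which is legitimate since $x \neq y$ and $\mu, \lambda, \alpha_{\mathcal{D}(\mu)}$ are variable-free, turns this into $\mathcal{D}\bigl((F\frac{\mu}{x})\frac{\lambda}{y}\bigr) = \mathcal{D}\bigl((F\frac{\alpha_{\mathcal{D}(\mu)}}{x})\frac{\lambda}{y}\bigr)$ for every $\lambda \in \hat{\mathcal{L}}_*$. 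Quantifying over $\lambda$ and invoking Condition 6.4 once more on each side gives $\mathcal{D}(H\frac{\mu}{x}) = \mathcal{D}(H\frac{\alpha_{\mathcal{D}(\mu)}}{x})$, closing the induction. I anticipate that the only delicate points are checking that substitution preserves degree and that the two substitutions genuinely commute in the variable-free setting; both should follow at once from the definition of substitution on $\hat{\mathcal{L}}$.
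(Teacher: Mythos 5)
Your proposal is correct and follows essentially the same route as the paper: part (a) via Condition (4) with $\lambda = x$ plus surjectivity, and part (b) by induction on the degree, with the quantifier case handled exactly as in the paper — applying the induction hypothesis to the auxiliary formula $F\frac{\lambda}{y}$ and commuting the two variable-free substitutions. The only cosmetic difference is that the paper splits the quantifier case by whether $\mbox{free}(QyH)$ is empty rather than by whether $y=x$, which covers the same ground.
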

\begin{proof}
(a) We obtain from Condition (4) with $\lambda = x \in X$ that
\begin{equation*}
\mathcal{D}(\mu)=\mathcal{D}\left( x \frac{\mu}{x} \right)
=\mathcal{D}\left( x \frac{\alpha_{\mathcal{D}(\mu)}}{x} \right)
=\mathcal{D}\left( \alpha_{\mathcal{D}(\mu)} \right)
\end{equation*}
for all $\mu \in \hat{\mathcal{L}}_*$.
Note that $\mathcal{D}: \hat{\mathcal{L}}_* \to \mathcal{D}_*$ is surjective.
Hence for each $d \in \mathcal{D}_*$ we have $\mu \in \hat{\mathcal{L}}_*$
with
\begin{equation*}
d=\mathcal{D}(\mu)
=\mathcal{D}\left( \alpha_{\mathcal{D}(\mu)} \right)
=\mathcal{D}(\alpha_{d})\,.
\end{equation*}

(b) We employ induction with respect to the degree of the formulas:
For all $\lambda, \nu \in \hat{\mathcal{L}}$ with 
$\mbox{var}(\lambda) \subseteq \{x\}$, 
$\mbox{var}(\nu) \subseteq \{x\}$ and for all 
$\mu \in \hat{\mathcal{L}}_*$ we have
\begin{align*}
\mathcal{D}\left( \lambda \frac{\mu}{x} \right)=
\mathcal{D}\left( \lambda \frac{\alpha_{\mathcal{D}(\mu)}}{x} \right)
\,,\quad
\mathcal{D}\left( \nu \frac{\mu}{x} \right)=
\mathcal{D}\left( \nu \frac{\alpha_{\mathcal{D}(\mu)}}{x} \right)
\,,
\end{align*}
and hence
\begin{align*}
& \mathcal{D}\left( \mbox{SbF}(\sim \lambda,\nu; \mu; x) \right)=
\mathcal{D}\left( \sim \lambda\frac{\mu}{x},\, \nu \frac{\mu}{x} \right)=\top
\quad \Leftrightarrow\\
& 
\mathcal{D}\left( \lambda\frac{\mu}{x}\right) = 
\mathcal{D}\left(  \nu \frac{\mu}{x} \right)
\quad \Leftrightarrow \quad
\mathcal{D}\left( \lambda \frac{\alpha_{\mathcal{D}(\mu)}}{x} \right) = 
\mathcal{D}\left( \nu \frac{\alpha_{\mathcal{D}(\mu)}}{x} \right)
\quad \Leftrightarrow\\
& 
\mathcal{D}\left( \sim \lambda\frac{\alpha_{\mathcal{D}(\mu)}}{x},\, \nu \frac{\alpha_{\mathcal{D}(\mu)}}{x} \right)=
\mathcal{D}\left( \mbox{SbF}(\sim \lambda,\nu; \, \alpha_{\mathcal{D}(\mu)}; x) 
\right)= \top\,.
\end{align*}

We obtain for $p \in P$ that
\begin{align*}
\mathcal{D}\left( \mbox{SbF}(p; \mu; x) \right)=
\mathcal{D}\left(p \right)=
\mathcal{D}\left( \mbox{SbF}(p; \alpha_{\mathcal{D}(\mu)}; x) \right)
\in \{\top, \bot \}\,,
\end{align*}
and if $\lambda_1,\ldots,\lambda_n \in \hat{\mathcal{L}}$
with $\mbox{var}(\lambda_j) \subseteq \{x\}$ for all $j=1,\ldots,n$:
\begin{align*}
& \mathcal{D}\left( \mbox{SbF}(p \, \lambda_1,\ldots, \lambda_n \,; \mu; x) \right)=
\mathcal{D}\left( p \, \lambda_1\frac{\mu}{x},\ldots, \lambda_n 
\frac{\mu}{x} \,\right)=\top
\quad \Leftrightarrow\\
& 
\left(\mathcal{D}\left( \lambda_1\frac{\mu}{x}\right) \,,\ldots, 
\mathcal{D}\left(  \lambda_n \frac{\mu}{x} \right)\right) \in p_n
\quad\Leftrightarrow \\
& \left(\mathcal{D}\left( \lambda_1\frac{\alpha_{\mathcal{D}(\mu)}}{x}\right) \,,\ldots, 
\mathcal{D}\left(  \lambda_n \frac{\alpha_{\mathcal{D}(\mu)}}{x} \right)\right) 
\in p_n \quad \Leftrightarrow \\
& 
\mathcal{D}\left( \mbox{SbF}(p \, \lambda_1\,,\ldots ,\lambda_n \,; 
\alpha_{\mathcal{D}(\mu)}; x) \right)= \top\,.
\end{align*}
We have proven the statement from Lemma \ref{structure_lemma}(b)
for all formulas $H$ of degree zero, i.e. for all prime formulas.

Suppose that for some $n \in \setN$
we have already proven the statement of Lemma \ref{structure_lemma}(b)
for all formulas $H$ in $[\hat{M};\hat{\mathcal{L}}]$
with $\mbox{deg}(H)<n$ and $\mbox{free}(H) \subseteq \{x\}$\,.
We define the propositional functions
\begin{align*}
\Pi_{\neg}\,:\, \{\top,\bot \} \to \{\top,\bot \} \mbox{\,~and~\,}
\Pi_{\to}\,, \Pi_{\leftrightarrow}\,, \Pi_{\&}\,, \Pi_{\vee}\,:\,
\{\top,\bot \}^2 \to \{\top,\bot \} 
\end{align*}
with
\begin{align*}
& \Pi_{\neg}(\xi)=(\mbox{not}\, \xi)\,,\\
& \Pi_{\to}(\xi_1,\xi_2)=(\xi_1 \Rightarrow \xi_2)\,, \quad
  \Pi_{\leftrightarrow}(\xi_1,\xi_2)=(\xi_1 \Leftrightarrow \xi_2)\,, \\
& \Pi_{\&}(\xi_1,\xi_2)=(\xi_1 ~ \mbox{and}~\xi_2)\,, \quad
  \Pi_{\vee}(\xi_1,\xi_2)=(\xi_1 ~ \mbox{or}~\xi_2)\,.
\end{align*}
We suppose that $F$, $G$ are formulas in $[\hat{M};\hat{\mathcal{L}}]$
with $\mbox{free}(F) \subseteq \{x\}$, $\mbox{free}(G) \subseteq \{x\}$
and $\mbox{deg}(F) < n$, $\mbox{deg}(G) < n$\,.
Then we obtain for all $\mu \in \hat{\mathcal{L}}_*$:
\begin{align*}
&\mathcal{D}\left( \mbox{SbF}(\neg F; \mu; x) \right)=
\mathcal{D}\left(\neg \mbox{SbF}(F; \mu; x) \right)=\\
&\Pi_{\neg}\left(\mathcal{D}\left( \mbox{SbF}(F; \mu; x) \right)\right)=
\Pi_{\neg}\left(\mathcal{D}\left( \mbox{SbF}(F; \alpha_{\mathcal{D}(\mu)}; x) \right)\right)=\\
&\mathcal{D}\left(\neg \mbox{SbF}(F; \alpha_{\mathcal{D}(\mu)}; x) \right)=
\mathcal{D}\left(\mbox{SbF}(\neg F; \alpha_{\mathcal{D}(\mu)}; x) \right)
\in \{\top, \bot \}\,,
\end{align*}
and for $J \in \{\to;\,\leftrightarrow;\,\&;\,\vee \}$:
\begin{align*}
&\mathcal{D}\left( \mbox{SbF}(J\, F \, G; \mu; x) \right)=
\mathcal{D}\left(J \, \mbox{SbF}(F; \mu; x) \, \mbox{SbF}(G; \mu; x)\right)=\\
&\Pi_{J}\left(\mathcal{D}\left( \mbox{SbF}(F; \mu; x) \right),
\mathcal{D}\left( \mbox{SbF}(G; \mu; x) \right)\right)=\\
&\Pi_{J}\left(\mathcal{D}\left( \mbox{SbF}(F; \alpha_{\mathcal{D}(\mu)}; x),
\mbox{SbF}(G; \alpha_{\mathcal{D}(\mu)}; x)
 \right)\right)=\\
&\mathcal{D}\left(J \, \mbox{SbF}(F; \alpha_{\mathcal{D}(\mu)}; x)\,
\mbox{SbF}(G; \alpha_{\mathcal{D}(\mu)}; x) \right)=
\mathcal{D}\left(\mbox{SbF}(J\, F\, G; \alpha_{\mathcal{D}(\mu)}; x) \right)
\,.
\end{align*}
We finally suppose that $H$ is a formula in $[\hat{M};\hat{\mathcal{L}}]$
with $\mbox{deg}(H) < n$, $x, y \in X$ and $\mbox{free}(Q\, y\, H) \subseteq \{x\}$,
$Q \in \{\forall; \exists \}$ and $\mu \in \hat{\mathcal{L}}_*$.

\textit{Case 1:} $\mbox{free}(Q\, y\, H) = \emptyset$\,. Then 
\begin{align*}
\mbox{SbF}(Q\, y\, H; \mu; x) = Q\, y\, H =
\mbox{SbF}(Q\, y\, H; \alpha_{\mathcal{D}(\mu)}; x)\,.
\end{align*}

\textit{Case 2:} $\mbox{free}(Q\, y\, H) = \{x\}$ \,.
Then $x \neq y$, and we have
\begin{align*}
& \mathcal{D}\left(\mbox{SbF}(\forall \, y\, H; \mu; x) \right) = 
\mathcal{D}\left(\forall \, y\, H \frac{\mu}{x} \right) = \top \quad
\Leftrightarrow\\
& \mathcal{D}\left(H \frac{\mu}{x} \frac{\lambda}{y} \right) = \top \quad
\mbox{for~all~} \lambda \in \hat{\mathcal{L}}_* \quad
\Leftrightarrow\\
& \mathcal{D}\left(H \frac{\lambda}{y} \frac{\mu}{x} \right) = \top \quad
\mbox{for~all~} \lambda \in \hat{\mathcal{L}}_* \quad
\Leftrightarrow
\end{align*}
\begin{align*}
& \mathcal{D}\left(H \frac{\lambda}{y} 
\frac{\alpha_{\mathcal{D}(\mu)}}{x} \right) = \top \quad
\mbox{for~all~} \lambda \in \hat{\mathcal{L}}_* \quad
\Leftrightarrow\\
& \mathcal{D}\left(H 
\frac{\alpha_{\mathcal{D}(\mu)}}{x} \frac{\lambda}{y} \right) = \top \quad
\mbox{for~all~} \lambda \in \hat{\mathcal{L}}_* \quad
\Leftrightarrow\\
& \mathcal{D}\left(\forall y \, H 
\frac{\alpha_{\mathcal{D}(\mu)}}{x} \right) = \top \quad
\Leftrightarrow\\
& \mathcal{D}\left(\mbox{SbF} \left( \forall y \, H \, ;\,
\alpha_{\mathcal{D}(\mu)}; x \right) \right)= \top \,.
\end{align*}
We proceed in the same way in order to prove that 
\begin{align*}
\mathcal{D}\left(\mbox{SbF} \left( \exists y \, H \, ;\,
\mu; x \right) \right) = 
\mathcal{D}\left(\mbox{SbF} \left( \exists y \, H \, ;\,
\alpha_{\mathcal{D}(\mu)}; x \right) \right) \,.
\end{align*}
\end{proof}
\begin{rem}\label{shoenfield_structure}
Here we consider a special kind of structures for formal mathematical systems,
but one that is mainly considered in model theory:

To each constant or function symbol $a \in A$ we assign 
a fixed arity $n \in \setN_0$. 
For $n=0$ we say that $a$ is a constant symbol, and for $n \geq 1$ we say that $a$ is an $n$-ary function symbol.
Then $\mathcal{L}$ consists only on terms which are generated by the following rules.
\begin{itemize}
\item[1.] We have $x \in \mathcal{L}$ 
for all variables $x \in X=\{\,{\bf x_1}\,,\,{\bf x_2}\,,\,{\bf x_3}\,,\, \ldots \,\}$.
\item[2.] We have $a \in \mathcal{L}$ for all constant symbols $a \in A$.
\item[3.] Let $n>0$ and let $f$ be an $n$-ary function symbol in $A$.\\
Then $f(\lambda_1 \ldots \lambda_n) \in \mathcal{L}$ for all terms
$\lambda_1,\ldots,\lambda_n \in \mathcal{L}$.
\end{itemize}
To obtain a structure $\mathcal{D}$ 
for $[M;\mathcal{L}]$ we are following the textbook 
of Shoenfield \cite[Section 2.5]{Shoenfield}.

Given is a nonempty set $\mathcal{D}_*$. Its members are called individuals.
By induction with respect to the terms we shall now define an individual
$\mathcal{D}(t)$ for each variable-free term $t$ as follows:
\begin{itemize}
\item[i)]  Each individual $d \in \mathcal{D}_*$ has its own name $\alpha_d$.
Let $\mathcal{N}$ be the set of all these names.
We suppose that the symbols in $[M;\mathcal{L}]$ are different
from the symbols in $\mathcal{N}$ and that 
$d_1=d_2 \Leftrightarrow \alpha_{d_1}=\alpha_{d_2}$
for all $d_1,d_2 \in \mathcal{D}_*$.
Define $[\hat{M};\hat{\mathcal{L}}]$ and $\hat{\mathcal{L}}_*$
as in Condition (3).
\item[ii)] To each constant symbol $a \in \hat{\mathcal{L}}_*$ we assign a value $\mathcal{D}(a) \in \mathcal{D}_*$ 
with $d=\mathcal{D}(\alpha_{d})$ for all $d \in \mathcal{D}_*$.
To each $n$-ary function symbol $f$ we assign an $n$-ary function
$f_{\mathcal{D}} : \mathcal{D}_*^n \to \mathcal{D}_*$.
Let $t_1,\ldots,t_n \in \hat{\mathcal{L}}_*$ be variable-free terms and let
$\mathcal{D}(t_1),\ldots,\mathcal{D}(t_n) \in \mathcal{D}_*$
be defined previously. Then we put
$\begin{displaystyle}
\mathcal{D}(f(t_1\ldots t_n))=
f_{\mathcal{D}}(\mathcal{D}(t_1),\ldots,\mathcal{D}(t_n))
\end{displaystyle}$\,.
%\item[iii)] To each predicate symbol $p \in P$ we assign an
%$n$-ary predicate $p_{\mathcal{D}} \subseteq \mathcal{D}_*^n$.
%For variable-free terms $t_1,\ldots,t_n$ we put
%\begin{align*}
%\mathcal{D}(p \, t_1, \ldots , t_n))=\top \Leftrightarrow
%(\mathcal{D}(t_1),\ldots,\mathcal{D}(t_n)) \in p_{\mathcal{D}}\,.
%\end{align*}
%Note that $p_{\mathcal{D}} \in \{ \top, \bot\}$ for $n=0$. 
%For variable-free terms $t_1,t_2$ we put
%$\begin{displaystyle}
%\mathcal{D}(\sim t_1, t_2))=\top \Leftrightarrow
%\mathcal{D}(t_1)=\mathcal{D}(t_2)
%\end{displaystyle}$\,.
\end{itemize}
The remai\-ning steps of the construction are due to Conditions (5)-(7).
Then $\mathcal{D}$ is a structure for $[M;\mathcal{L}]$, and
Shoenfield's first lemma in \cite[Section 2.5]{Shoenfield}
is just a variant of our Lemma \ref{structure_lemma}.

\end{rem}
\begin{lem}\label{correct}
Let $\mathcal{D}$ be a model for $[M;\mathcal{L}]$.  Using the notation from Condition (3) we obtain for every formula $F$ in $[\hat{M};\hat{\mathcal{L}}]$:
\begin{equation*}
F \in \Pi(\hat{M};\hat{\mathcal{L}}) \Rightarrow F \mbox{~is~valid~in~}
\mathcal{D}\,.
\end{equation*}
Here $\Pi(\hat{M};\hat{\mathcal{L}})$ denotes the set of provable formulas 
in $[\hat{M};\hat{\mathcal{L}}]$.
\end{lem}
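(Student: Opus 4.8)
The plan is to prove this soundness statement by induction on the length of a proof $[\Lambda]$ in $[\hat{M},\hat{\mathcal{L}}]$, showing that \emph{every} formula occurring in $[\Lambda]$ is valid in $\mathcal{D}$. Recall from Condition (7) that a formula $F$ with $\mbox{free}(F)=\{x_1,\ldots,x_m\}$ is valid in $\mathcal{D}$ exactly when $\mathcal{D}(F\frac{\lambda_1}{x_1}\cdots\frac{\lambda_m}{x_m})=\top$ for all $\lambda_1,\ldots,\lambda_m\in\hat{\mathcal{L}}_*$, equivalently when $\mathcal{D}(\forall x_1\cdots\forall x_m\,F)=\top$. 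The two ingredients that drive the whole argument are the truth-value clauses 6.1--6.4, which compute $\mathcal{D}$ compositionally via the propositional functions $\Pi_{\neg},\Pi_{\to},\Pi_{\leftrightarrow},\Pi_{\&},\Pi_{\vee}$ introduced in the proof of Lemma \ref{structure_lemma}, and the substitution identity of Lemma \ref{structure_lemma}(b), which guarantees that substituting an arbitrary list $\mu\in\hat{\mathcal{L}}_*$ yields the same truth value as substituting the name $\alpha_{\mathcal{D}(\mu)}$ of its value.

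For the base of the induction I would verify that each axiom used in $[\hat{M},\hat{\mathcal{L}}]$ is valid in $\mathcal{D}$. The basis axioms $F\in B$ are valid by Condition (8), and since $\hat{M}=[\hat{A};P;B]$ carries the same axiom set, this covers them verbatim. The propositional logical axiom schemes reduce, after substituting ground lists for the free variables, to truth-functional tautologies that are checked directly from clause 6.3 by evaluating the relevant $\Pi_J$; the identity and equality axioms follow from clause 6.1. For the substitution axiom relating $F\frac{t}{x}$ to $\exists x\,F$, I would apply clause 6.4 together with Lemma \ref{structure_lemma}(b): since every ground instance $F\frac{\mu}{x}$ has the same value as $F\frac{\alpha_{\mathcal{D}(\mu)}}{x}$, a witness in the existential clause may always be taken from the names, and the axiom evaluates to $\top$.

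For the inductive step I would treat the rules of inference \cite[(3.13)(a)--(d)]{Ku} one at a time, assuming all premises valid and deducing that the conclusion is valid. The purely propositional rules are handled by a finite case analysis on the $\Pi_J$, exactly as the connective cases were handled in the proof of Lemma \ref{structure_lemma}. The substitution rule \cite[(3.13)(c)]{Ku}, which replaces a variable by a list $\lambda\in\mathcal{L}$, is treated by passing to ground instances and invoking Lemma \ref{structure_lemma}(b) once more, so that validity (truth of all ground instances) is preserved. The quantifier rule is matched with validity through the observation that $F\in\Pi(\hat{M},\hat{\mathcal{L}})$ iff $\forall x\,F\in\Pi(\hat{M},\hat{\mathcal{L}})$ from \cite[(3.11)(a),(3.13)(b)(d)]{Ku}, combined with clause 6.4.

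I expect the main obstacle to be the correct bookkeeping for the quantifier and substitution rules, where the definition of validity quantifies over \emph{all} ground lists in $\hat{\mathcal{L}}_*$ while the rules act on possibly open formulas. The crucial point is that Lemma \ref{structure_lemma}(b) lets one replace an arbitrary ground list by a name without changing any truth value, which is precisely what is needed to commute the two outer substitutions in expressions such as $H\frac{\lambda}{y}\frac{\mu}{x}$ and to ensure that the witnessing and instantiation steps in clause 6.4 range correctly over the universe. Establishing this compatibility uniformly, rather than the individual tautology checks, is the substance of the proof.
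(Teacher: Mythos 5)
Your proposal follows essentially the same route as the paper: induction on the length of a proof in $[\hat{M},\hat{\mathcal{L}}]$, verifying that each axiom scheme (basis axioms via Condition (8), propositional tautologies via clause 6.3, equality and quantifier axioms via clauses 6.1 and 6.4 together with Lemma \ref{structure_lemma}(b)) is valid and that each rule of inference \cite[(3.13)(a)--(d)]{Ku} preserves validity, with Lemma \ref{structure_lemma}(b) doing exactly the work you assign to it in the substitution and quantifier cases. The only minor discrepancy is that you describe ``$F\frac{t}{x}\to\exists x\,F$'' as an axiom, whereas in the referenced calculus instantiation is the inference rule (c) and the quantifier axioms are the $\forall$-instantiation, $\forall$-distribution, and $\exists/\neg\forall\neg$ schemes; this does not affect the substance of the argument.
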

\begin{proof}
For any proof $[\Lambda]$ in $[\hat{M};\hat{\mathcal{L}}]$
and any step $F$ of $[\Lambda]$ we have to show that $F$ is valid in $\mathcal{D}$.
This is true for $[\Lambda]=[\,]$. Let $[\Lambda]=[F_1;\ldots;F_l]$ be a proof
in $[\hat{M};\hat{\mathcal{L}}]$ with the steps $F_1,\ldots,F_l$.
We assume that $F_1,\ldots,F_l$ are valid in $\mathcal{D}$ and use induction
with respect to the rules of inference (a)-(d).
We put $\mbox{gen}(G)=\forall x_1 \ldots \forall x_j G$
for every formula $G$ in $[\hat{M};\hat{\mathcal{L}}]$ 
with $\mbox{free}(G)=\{x_1,\ldots,x_j\}$ and 
distinct variables $x_1,\ldots,x_j \in X$,
ordered according to their first occurrence in $G$.

\textit{Rule (a)}. We have to show that every axiom $F$ of $[\hat{M};\hat{\mathcal{L}}]$
is valid in $\mathcal{D}$. Then the extended proof 
$[\Lambda_*]=[\Lambda;F]$ will also satisfy the required property.

\textit{Basis axioms.}  The formal systems $[M;\mathcal{L}]$ and $[\hat{M};\hat{\mathcal{L}}]$
both have the same basis axioms. Since $\mathcal{D}$
is a model of $[M;\mathcal{L}]$, we conclude that every basis axiom $F$
of $[\hat{M};\hat{\mathcal{L}}]$ is valid in $\mathcal{D}$.

\textit{Axioms of the propositional calculus.}  
Let $\alpha=\alpha(\xi_1,\ldots,\xi_j)$ be a gene\-rally valid
propositional function of the propositional variables $\xi_1,\ldots,\xi_j$
according to \cite[(3.8)]{Ku}. Let $F_1, \ldots, F_j$ be formulas in
$[\hat{M};\hat{\mathcal{L}}]$. Then $F = \alpha(F_1,\ldots,F_j)$ is
an axiom of the propositional calculus in $[\hat{M};\hat{\mathcal{L}}]$.
Assume that $\mbox{free}(F_k) \subseteq \{x_1,\ldots,x_m\}$ for all $k=1,\ldots,j$.
We use the definition of the function $\overline{\psi}$
from \cite[(3.8)]{Ku}, choose $\lambda_1,\ldots,\lambda_m \in \hat{\mathcal{L}}_*$ and obtain for 
$\begin{displaystyle} \overline{\psi}(\xi_k)=\mathcal{D}
\left(
F_k \frac{\lambda_1}{x_1}\ldots \frac{\lambda_m}{x_m}
\right)
\end{displaystyle}$:
\begin{align*}
& F \frac{\lambda_1}{x_1} \ldots \frac{\lambda_m}{x_m} =\alpha
\left(
F_1 \frac{\lambda_1}{x_1}\ldots \frac{\lambda_m}{x_m}\,,
\ldots\,,
F_j \frac{\lambda_1}{x_1}\ldots \frac{\lambda_m}{x_m}
\right) \Rightarrow\\
& \mathcal{D}\left( F \frac{\lambda_1}{x_1}\ldots\frac{\lambda_m}{x_m} \right) 
= \overline{\psi}(\alpha)=\top\,.
\end{align*}
We see that $F$ is valid in $\mathcal{D}$.

\textit{Axioms of equality.} 
Let $F$ be an axiom of equality from \cite[(3.10)]{Ku} in
$[\hat{M};\hat{\mathcal{L}}]$. Due to
Lemma \ref{structure_lemma} it is sufficient for the evaluation of $\mathcal{D}$
if we replace all free variables in $F$ by names.\\
(a) For $F \, = \sim x,x$ and $d \in \mathcal{D}_*$ we have
$\mathcal{D}(\alpha_d)=d$, 
$\mathcal{D}(\sim \alpha_d,\alpha_d)=\top$\,.\\
(b) For $F = \, \to \mbox{SbF}(\sim \lambda, \mu ; x; y) \to \, \sim x,y \sim \lambda,\mu$ with $x,y \in X$ and $\lambda, \mu \in \hat{\mathcal{L}}$ we may assume that
$x \neq y$. Otherwise $F$ is an axiom of the propositional calculus.
After having replaced all free variables other than $x$ and $y$ by names we may also assume that $\mbox{free}(\lambda) \subseteq \{x,y\}$ and
$\mbox{free}(\mu) \subseteq \{x,y\}$. For $d, d' \in \mathcal{D}_*$ 
we obtain
\begin{align*}
& \mathcal{D}\left(F \frac{\alpha_d}{x} \frac{\alpha_{d'}}{y}\right)=\\
& \mathcal{D}\left(
\, \to \, \sim \lambda \frac{x}{y} \frac{\alpha_d}{x}, 
\mu \frac{x}{y} \frac{\alpha_d}{x} \to \, \sim \alpha_{d} ,\alpha_{d'} 
\sim \lambda \frac{\alpha_d}{x} \frac{\alpha_{d'}}{y},
\mu \frac{\alpha_d}{x} \frac{\alpha_{d'}}{y}
\right)\,.
\end{align*}
For $d \neq d'$ we have 
$\mathcal{D}\left(\sim \alpha_{d} ,\alpha_{d'} \right) = \bot$ and 
$\mathcal{D}\left(F \frac{\alpha_d}{x} \frac{\alpha_{d'}}{y}\right)=\top$.\\
Otherwise we have $d = d'$ and 
\begin{align*}
\sim \lambda \frac{x}{y} \frac{\alpha_d}{x}, 
\mu \frac{x}{y} \frac{\alpha_d}{x} = \,
\sim \lambda \frac{\alpha_d}{x} \frac{\alpha_d}{y},
\mu \frac{\alpha_d}{x} \frac{\alpha_d}{y} =\,
\sim \lambda \frac{\alpha_{d}}{x} \frac{\alpha_{d'}}{y},
\mu \frac{\alpha_{d}}{x} \frac{\alpha_{d'}}{y}
\,.
\end{align*}
We see again 
$\mathcal{D}\left(F \frac{\alpha_d}{x} \frac{\alpha_{d'}}{y}\right)=\top$.\\
(c) Here we consider the axiom of equality
$$F = \,\to \, \sim x_1,y_1 \ldots \to \,\sim x_n,y_n \, \to
p\,x_1,\ldots,x_n \,p\,y_1,\ldots,y_n$$ 
with $p \in P$, $n \geq 1$ and
$x_1,\ldots,x_n , y_1,\ldots,y_n \in X$.
Let $F_*$ result from $F$ if we replace all (free) variables in $F$ with names.
Then $\mathcal{D}\left(F_*\right)=\top$ for
 \begin{align*}
F_* = \,\to \, \sim \lambda_1,\lambda'_1 \ldots 
\to \,\sim \lambda_n,\lambda'_n \, \to
p\,\lambda_1,\ldots,\lambda_n \,p\,\lambda'_1,\ldots,\lambda'_n
\end{align*}
whenever $\lambda_j \neq \lambda'_j$ 
for two names $\lambda_j , \lambda'_j$\,.
Otherwise $\lambda_j = \lambda'_j$ for all $j=1,\ldots,n$, and 
also in this case $\mathcal{D}\left(F_*\right)=\top$.

\textit{Quantifier axioms.} 
(a) Let
$\begin{displaystyle}
\mbox{free}(\forall x F ) = \{x_1,\ldots,x_m\}
\end{displaystyle}$\,.
We put\\ $H = \, \rightarrow \forall x F ~ F$ and 
note that $x \notin \{x_1,\ldots,x_m\}$. We see
$\top = \mathcal{D}(\mbox{gen}(H))$ iff
$$\top = \mathcal{D}(\mbox{SbF}(\tilde{H};\mu;x)) = 
\mathcal{D}(\rightarrow \forall x \tilde{F} ~ 
\mbox{SbF}(\tilde{F};\mu;x))$$
for all $\mu, \lambda_1, \ldots \lambda_m \in \hat{{\mathcal L}}_*$,
using $\tilde{F} = F\frac{\lambda_1}{x_1}\ldots\frac{\lambda_m}{x_m}$ and
$$
\tilde{H} = H\frac{\lambda_1}{x_1}\ldots\frac{\lambda_m}{x_m}
= \, \rightarrow \forall x \tilde{F} ~ \tilde{F}
$$
as abbreviations. 

Now $\top = \mathcal{D}(\forall x \tilde{F})$
implies indeed $\top = \mathcal{D}(\mbox{SbF}(\tilde{F};\mu;x))$ for all\\ 
$\mu, \lambda_1, \ldots \lambda_m \in \hat{{\mathcal L}}_*$,
independent of $x \in \mbox{free}(F)$ or $x \notin \mbox{free}(F)$.

(b) Suppose that $x \in X$, that $F$, $G$ are formulas
in $[\hat{M};\hat{\mathcal{L}}]$
and that $x \notin \mbox{free}(F)$, 
$\mbox{free}(\forall x \rightarrow F G)=
\mbox{free}(\rightarrow F \, \forall x G)=\{x_1;\ldots;x_m\}$. We put 
$$H = \, \rightarrow \forall x \rightarrow F G ~ \rightarrow F \, \forall x G\,,$$ fix arbitrary lists
$\lambda_1, \ldots \lambda_m \in \hat{{\mathcal L}}_*$ and make use of the abbreviations
$\tilde{F} = F\frac{\lambda_1}{x_1}\ldots\frac{\lambda_m}{x_m}$ and
$\tilde{G} = G\frac{\lambda_1}{x_1}\ldots\frac{\lambda_m}{x_m}$.
We have
$$
\tilde{H} = H\frac{\lambda_1}{x_1}\ldots\frac{\lambda_m}{x_m}
= \, \, \rightarrow \forall x \rightarrow \tilde{F} \tilde{G} ~ 
\rightarrow \tilde{F} \, \forall x \tilde{G}
$$
with $\mbox{free}(\tilde{H})=\emptyset$\,.
In order to show $\top = \mathcal{D}(\tilde{H})$ we assume\\ 
$\top = \mathcal{D}( \forall x \rightarrow \tilde{F} \tilde{G})$
and note that $x \notin \mbox{free}(\tilde{F})$.
Then
$$\qquad \top = \mathcal{D}( \forall x \rightarrow \tilde{F} \tilde{G})
~\mbox{iff}~
\top = \mathcal{D}( \rightarrow \tilde{F} \, \mbox{SbF}(\tilde{G};\lambda;x))$$
for all $\lambda \in \hat{{\mathcal L}}_*$.
Hence $\top = \mathcal{D}(\tilde{F})$
implies $\top = \mathcal{D}(\mbox{SbF}(\tilde{G};\lambda;x))$ for all 
$\lambda \in \hat{\mathcal{L}}_*$, i.e. $\top = \mathcal{D}(\tilde{F})$
implies $\top = \mathcal{D}(\, \forall x \tilde{G})$,
and we have shown $\top = \mathcal{D}(\,\to \tilde{F}\, \forall x \tilde{G})$
and $\top = \mathcal{D}(\tilde{H})$.

(c) Let the formula $H = \, \leftrightarrow \neg \forall x \neg F \, \exists x F$
in $[\hat{M};\hat{\mathcal{L}}]$ be a quantifier axiom 
according to \cite[(3.11)(c)]{Ku} with $\mbox{free}(H)=\{x_1,\ldots,x_m\}$.
We note that $x \notin \mbox{free}(H)$ and that
$\mbox{free}(F) \subseteq \{x,x_1,\ldots,x_m\}$.
We prescribe $\lambda_1,\ldots,\lambda_m \in \hat{\mathcal{L}}_*$
and put
$$
\tilde{F} = F \frac{\lambda_1}{x_1}\ldots \frac{\lambda_m}{x_m}\,,~
\tilde{H} = \, \leftrightarrow \neg \forall x \neg \tilde{F} \, \exists x \tilde{F}\,.
$$
Then we have 
$\begin{displaystyle} 
\tilde{H} = H \frac{\lambda_1}{x_1}\ldots \frac{\lambda_m}{x_m}
\end{displaystyle}$ and
\begin{align*}
& \mathcal{D}(\exists x \tilde{F})=\top \quad \Leftrightarrow \quad
\left( \mbox{there~exists~}\lambda \in \hat{{\mathcal L}}_* \mbox{~with~}
\mathcal{D}(\tilde{F}\frac{\lambda}{x})=\top \right) \quad \Leftrightarrow\\
& 
\left( \mbox{there~exists~}\lambda \in \hat{{\mathcal L}}_* \mbox{~with~}
\mathcal{D}(\neg \tilde{F}\frac{\lambda}{x})=\bot \right) \quad \Leftrightarrow\\
& 
\left( \mbox{not~for~all~}\lambda \in \hat{{\mathcal L}}_* \mbox{~we~have~}
\mathcal{D}(\neg \tilde{F}\frac{\lambda}{x})=\top \right) \quad \Leftrightarrow\\
& \mathcal{D}(\neg \forall x \neg \tilde{F})=\top \,.
\end{align*}
We conclude that $\mathcal{D}(\tilde{H})=\top$ and see that 
$H$ is valid in $\mathcal{D}$.

\textit{Rule (b).} Suppose that $F$ and $H=\rightarrow F G$ are both steps
of the proof $[\Lambda]=[F_1;\ldots;F_l]$
with $\mbox{free}(\rightarrow F G)=\{x_1,\ldots,x_m\}$\,.

Then $\top = \mathcal{D}(\mbox{gen}(F))$ and 
$\top = \mathcal{D}(\mbox{gen}(H))$ from our induction hypothesis. Fix
$\lambda_1, \ldots \lambda_m \in \hat{{\mathcal L}}_*$ and put
$\tilde{F} = F\frac{\lambda_1}{x_1}\ldots\frac{\lambda_m}{x_m}$,
$\tilde{G} = G\frac{\lambda_1}{x_1}\ldots\frac{\lambda_m}{x_m}$.\\
For 
$$
\tilde{H} = H\frac{\lambda_1}{x_1}\ldots\frac{\lambda_m}{x_m}
= \, \, \rightarrow \tilde{F} \tilde{G} 
$$
we have no free variables in $\tilde{F}, \tilde{G}, \tilde{H}$. 

Now
$\top = \mathcal{D}(\tilde{F})$,
$\top = \mathcal{D}(\tilde{H})$ and $\top = \mathcal{D}(\tilde{G})$.
Note that substitutions of variables in $\{x_1,\ldots,x_m\}$
not occurring in $F$ or $G$ are allowed, because they do not have any effect.
We obtain that the extended proof $[\Lambda_*]=[\Lambda;G]$
also satisfies our statement.

\textit{Rule (c).} Let $x \in X$ and suppose that $F$
is a step of the proof $[\Lambda]=[F_1;\ldots;F_l]$. 
Let $\lambda \in \hat{{\mathcal L}}$ and
suppose that there holds the condition $\mbox{CF}(F;\lambda;x)$. 
Note that $\top = \mathcal{D}(\mbox{gen}(F))$ from our induction hypothesis.

Without loss of generality we may assume that $x \in \mbox{free}(F)$,
with $\mbox{free}(F)=\{x,x_1,\ldots,x_m\}$ 
and distinct variables $x,x_1,\ldots,x_m \in X$,
and we put
$$
\Phi(F) = \{F\frac{\lambda_0}{x}
\frac{\lambda_1}{x_1}\ldots\frac{\lambda_m}{x_m}\,:
\,\lambda_0,\lambda_1,\ldots,\lambda_m \in \hat{\mathcal{L}}_*\,\}\,.
$$
We write $\mbox{var}(\lambda) =\{y_1,\ldots,y_k\}$ and 
$\lambda = \lambda(y_1,\ldots,y_k)$\,.

From $x \in \mbox{free}(F)$ and $\mbox{CF}(F;\lambda;x)$ we see that 
$$\mbox{var}(\lambda)  \subseteq \mbox{free}\left(F\frac{\lambda}{x}\right)\,.$$
Hence we can write
$\mbox{free}(F\frac{\lambda}{x})=\{y_1,\ldots,y_n\}$
with $n \geq k$ distinct variables $y_1,\ldots,y_n\in X$
and define the new set
$$
\Phi(F;\lambda;x)=\{F\frac{\lambda}{x}
\frac{\mu_1}{y_1}\ldots\frac{\mu_n}{y_n}\,:
\,\mu_1,\ldots,\mu_n \in \hat{\mathcal{L}}_*\,\}\,.
$$
Again from $\mbox{CF}(F;\lambda;x)$ we conclude that
$$\qquad
\Phi(F;\lambda;x)=\left\{F\frac{\lambda(\mu_1,\ldots,\mu_k)}{x}
\frac{\mu_1}{y_1}\ldots\frac{\mu_n}{y_n}\,:
\,\mu_1,\ldots,\mu_n \in \hat{\mathcal{L}}_*\,\right\}\,,
$$
hence $\Phi(F;\lambda;x) \subseteq \Phi(F)$, and
we obtain from our induction hypothesis $\top =\mathcal{D}(\mbox{gen}(F))$
that $\top = \mathcal{D}(\mbox{gen}(F\frac{\lambda}{x}))$.\\
Now the extended proof $[\Lambda_*]=[\Lambda;F\frac{\lambda}{x}]$
satisfies our statement.

\textit{Rule (d).} Let $F$ be a step of the proof $[\Lambda]=[F_1;\ldots;F_l]$. 
Then $F$ is valid in $\mathcal{D}$ due to our induction hypothesis,
and Condition (7) gives 
$$\mathcal{D}(\mbox{gen}(F))=\mathcal{D}(\mbox{gen}(\forall x \,F))=\top$$
for any variable  $x \in X$, i.e. $\forall x \,F$ is also valid in $\mathcal{D}$.
Now the extended proof $[\Lambda_*]=[\Lambda;\forall x\,F]$
satisfies our statement.
\end{proof}
%We omit the tedious proof which expresses the correctness of our logic calculus.
%We only use Rules (a)-(d), without induction, and here the underlying recursive system is assumed to be $S=[\,[\,],[\,],[\,]\,]$\,.
\begin{cor}\label{contra_free}
If the formal mathematical system $[M;\mathcal{L}]$
has a model $\mathcal{D}$, then it is consistent. 
\end{cor}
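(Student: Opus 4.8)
The plan is to derive consistency from the soundness statement in Lemma \ref{correct} by exhibiting a single formula that fails to be provable. Recall from Definition \ref{henkin_def}(a) that $[M;\mathcal{L}]$ is consistent precisely when $\Pi(M;\mathcal{L})$ does \emph{not} contain every formula of $[M;\mathcal{L}]$, so it suffices to produce one unprovable formula.

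First I would record the bookkeeping between the two systems. By Condition (3), $[\hat{M},\hat{\mathcal{L}}]$ is an extension of $[M;\mathcal{L}]$, so Definition \ref{extensions}(a) gives $\Pi(M;\mathcal{L}) \subseteq \Pi(\hat{M},\hat{\mathcal{L}})$. Hence it is enough to find a formula of $[M;\mathcal{L}]$ that does not lie in $\Pi(\hat{M},\hat{\mathcal{L}})$. The natural candidate is $F = \, \neg \sim x,x$ for some variable $x \in X$; this is a formula in both systems, since $x \in X \subseteq \mathcal{L} \subseteq \hat{\mathcal{L}}$.

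Next I would show that $F$ is not valid in $\mathcal{D}$. Because $\mathcal{D}_*$ is nonempty I can pick an individual $d$, whose name satisfies $\alpha_d \in \hat{\mathcal{L}}_*$ (indeed $\alpha_d = x\frac{\alpha_d}{x} \in \hat{\mathcal{L}}$ has no variables). By the equality clause 6.1 we have $\mathcal{D}(\sim \alpha_d,\alpha_d) = \top$, since trivially $\mathcal{D}(\alpha_d) = \mathcal{D}(\alpha_d)$; the negation clause 6.3 then gives $\mathcal{D}(\neg \sim \alpha_d,\alpha_d) = \bot$. As $\mbox{free}(F) = \{x\}$, Condition (7) requires $\mathcal{D}(F\frac{\lambda}{x}) = \top$ for \emph{all} $\lambda \in \hat{\mathcal{L}}_*$, and the choice $\lambda = \alpha_d$ already violates this. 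Therefore $F$ is not valid in $\mathcal{D}$.

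Finally, the contrapositive of Lemma \ref{correct} yields $F \notin \Pi(\hat{M},\hat{\mathcal{L}})$, and the inclusion from the previous paragraph then gives $F \notin \Pi(M;\mathcal{L})$. Thus not every formula of $[M;\mathcal{L}]$ is provable, so $[M;\mathcal{L}]$ is not contradictory, i.e. it is consistent. I do not anticipate a genuine obstacle: this is the classical ``soundness implies consistency'' argument, and the only point requiring care is keeping straight the distinction between provability in $[\hat{M},\hat{\mathcal{L}}]$, where the soundness of Lemma \ref{correct} is formulated, and provability in $[M;\mathcal{L}]$, where consistency is defined. That gap is bridged cleanly by the extension relation supplied in Condition (3), so no further work is needed.
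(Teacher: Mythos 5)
Your proposal is correct and follows essentially the same route as the paper: both derive consistency from the soundness Lemma \ref{correct} by exhibiting a formula built from $\neg \sim x,x$ that cannot be valid in $\mathcal{D}$ because of the equality clause 6.1 and the nonemptiness of the universe. The only cosmetic difference is that you work directly with the open formula $\neg \sim x,x$ and one name $\alpha_d$, whereas the paper argues by contradiction with the closed formula $\exists x \neg \sim x,x$; the substance is identical.
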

\begin{proof}
We assume the contrary and choose $x \in X$. Then
$\exists x \neg \sim\,x,x$ is provable in $[M;\mathcal{L}]$
and hence in its extension $[\hat{M};\hat{\mathcal{L}}]$.
We see from Lemma \ref{correct} that $\exists x \neg \sim\,x,x$ is 
valid in $\mathcal{D}$. Since we have $\hat{\mathcal{L}}_* \neq \emptyset$,
we have $\lambda \in \hat{\mathcal{L}}_*$ with
$\mathcal{D}(\neg \sim\,\lambda,\lambda)=\top$\,, i.e.
$\mathcal{D}(\sim\,\lambda,\lambda)=\bot$\,.
We obtain the contradiction $\mathcal{D}(\lambda) \neq \mathcal{D}(\lambda)$\,.
\end{proof}

For our new definition of a model we also obtain 
the following completeness theorem:

\begin{thm}\label{model_existence}
The formal mathematical system $[M;\mathcal{L}]$ is consistent
if and only if it has a model. 
\end{thm}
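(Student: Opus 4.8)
The forward implication ``model $\Rightarrow$ consistent'' is precisely Corollary \ref{contra_free}, so only the converse requires work. The plan is to attach to a consistent $[M;\mathcal{L}]$ a canonical \emph{term model} built from the syntactic material of a suitable extension, following the method of Henkin-constants announced in the introduction. The construction proceeds in three stages: first enlarge $[M;\mathcal{L}]$ to a consistent Henkin system, then to a complete consistent Henkin system, and finally read off a model from the provability relation of this last system.

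For the first two stages I would proceed as follows. To obtain witnesses, for every formula $F$ in $[M;\mathcal{L}]$ with $\mbox{free}(F) \subseteq \{x\}$ introduce a fresh constant $c_F \in A$ and the axiom $\to \exists x F\, F\frac{c_F}{x}$. Since adjoining witnesses produces new formulas, one iterates this countably often and takes the union $[M^*;\mathcal{L}^*]$. Using the Deduction Theorem \cite[(4.5)]{Ku} together with \cite[(5.1) Proposition]{Ku}, each single step is a conservative extension in the sense of Definition \ref{extensions}(b) and hence preserves consistency; therefore $[M^*;\mathcal{L}^*]$ is a consistent Henkin system. A Lindenbaum completion then follows: enumerate the closed formulas $G_1,G_2,\ldots$ of $[M^*;\mathcal{L}^*]$ and successively add $G_n$ or $\neg G_n$ to the basis axioms, whichever keeps the system consistent. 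The union $[M';\mathcal{L}']$ of this chain is a complete consistent system in the sense of Definition \ref{henkin_def}(c), and since no new symbols are introduced it remains a Henkin system.

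The third stage builds the model $\mathcal{D}$. I would take $\mathcal{D}_*$ to be the set of equivalence classes of variable-free lists $\lambda \in \mathcal{L}'_*$ under the relation $\lambda \equiv \mu$ defined by $\sim \lambda,\mu \in \Pi(M';\mathcal{L}')$; the equality axioms \cite[(3.10)]{Ku} make $\equiv$ an equivalence relation compatible with substitution. Assign to each class $d$ a fresh name $\alpha_d$, let $\mathcal{D}$ send a variable-free list to its class (interpreting a name $\alpha_d$ by any representative of $d$), and declare $(\mathcal{D}(\lambda_1),\ldots,\mathcal{D}(\lambda_n)) \in p_n \Leftrightarrow p\,\lambda_1,\ldots,\lambda_n \in \Pi(M';\mathcal{L}')$. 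Conditions (1)--(5) then follow from the equality axioms; the heart of the matter is the \emph{truth lemma}
\[
\mathcal{D}(F) = \top \quad\Longleftrightarrow\quad F \in \Pi(M';\mathcal{L}')
\]
for every closed formula $F$, proved by induction on $\mbox{deg}(F)$ in the style of Lemma \ref{structure_lemma}. The prime case is the definition of $\equiv$ and of $p_n$; the connective cases use completeness through the propositional calculus; the quantifier cases combine the Henkin property with completeness, since $\mathcal{D}(\exists x F)=\top$ forces a provable witness $F\frac{c}{x}$, while $\mathcal{D}(\forall x F)=\top$ is handled via the duality $\leftrightarrow \neg\forall x \neg F\, \exists x F$ and the negation clause.

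Once the truth lemma is established, every basis axiom of $[M;\mathcal{L}]$, being provable in $[M';\mathcal{L}']$, is valid in $\mathcal{D}$, so Condition (8) holds and $\mathcal{D}$ is a model; restricting attention to the original language yields a model of $[M;\mathcal{L}]$. I expect the main obstacle to lie in the truth lemma, specifically in synchronising the negation and universal-quantifier clauses: one must invoke completeness to convert $F \notin \Pi(M';\mathcal{L}')$ into $\neg F \in \Pi(M';\mathcal{L}')$ and the Henkin property to extract a genuine witness, and then check against consistency that no closed formula is simultaneously forced true and false. The bookkeeping for the iterated Henkin extension, ensuring conservativity at each level so that consistency survives the countable union, is the other point demanding care.
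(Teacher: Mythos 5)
The paper deliberately omits the proof of Theorem \ref{model_existence} (``Up to now the proof of this theorem is only available in handwritten notes\ldots it makes use of Zorn's lemma and the construction of certain Henkin-extensions''), so only your outline can be compared with the author's announced strategy, and in outline the two agree: forward direction by Corollary \ref{contra_free}, converse by a Henkin extension, a completion, and a term model whose universe consists of equivalence classes of variable-free lists under provable equality. Your description of the truth lemma, with the quantifier cases resting on the Henkin property of Definition \ref{henkin_def}(b) and on completeness via the duality axiom for $\exists$, is the standard and correct way to finish.

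One step in your write-up genuinely does not cover the theorem as stated. Theorem \ref{model_existence} sits in Section \ref{models}, where $[M;\mathcal{L}]$ is \emph{not} assumed countable (that hypothesis is only imposed ``throughout the whole section'' in Section \ref{LS}). Your Lindenbaum completion ``enumerate the closed formulas $G_1,G_2,\ldots$'' therefore presupposes more than you are given; for an uncountable set $A\cup P$ of symbols you must instead apply Zorn's lemma to the family of consistent extensions of $[M^*;\mathcal{L}^*]$ with the same symbols, ordered by inclusion of basis axioms, and verify that the union of a chain is consistent because every proof uses only finitely many axioms. This is exactly why the paper's announced proof invokes Zorn's lemma. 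A second, smaller point of bookkeeping: in this paper's notion of model, Conditions (3), (4) and (6.4) require $\mathcal{D}$ to be defined and evaluated on all of $\hat{\mathcal{L}}_*$, i.e.\ on lists containing the fresh names $\alpha_d$, not merely on the syntactic lists of $[M';\mathcal{L}']$. Your parenthetical ``interpreting a name $\alpha_d$ by any representative of $d$'' is the right move, but you should check explicitly that Condition (4) holds, i.e.\ that $\lambda\frac{\mu}{x}\equiv\lambda\frac{\mu'}{x}$ whenever $\sim\mu,\mu'\in\Pi(M';\mathcal{L}')$, using the equality axioms \cite[(3.10)]{Ku}; only then does the truth lemma for closed formulas of $[M';\mathcal{L}']$ propagate, via Lemma \ref{structure_lemma}, to the evaluation over $\hat{\mathcal{L}}_*$ demanded by Condition (7). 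With those two repairs your argument is a correct proof of the omitted theorem.
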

Up to now the proof of this theorem is only available in handwritten notes.
Based on the algebraic properties of the substitution, it makes use of Zorn's lemma and the construction of certain Henkin-extensions. 
In this paper we will not make use of Theorem \ref{model_existence} 
and hence omit its proof. But we also need Henkin-extensions of $[M;\mathcal{L}]$ and will introduce these in Section \ref{LS}.\\

\section{The downward L\"owenheim-Skolem Theorem}\label{LS}
Throughout the whole section let $[M;\mathcal{L}]$ with $M=[A;P;B]$ 
be a \textit{countable} formal mathematical system,
i.e. we assume that the sets $A$ and $P$ of symbols are countable.
Here we say that a set is countable if its either finite or countably infinite.

In the sequel we will assume  that $\mathcal{D}$ is a given
model for $[M;\mathcal{L}]$. Using the notation from Condition (3) for this model
we put $A''=\hat{A}$, 
$\begin{displaystyle}
B'' = \{ G \,:\, G \mbox{~is~a~formula~in~} [\hat{M};\hat{\mathcal{L}}]
\mbox{~which~is~valid~in~} \mathcal{D}\,\}
\end{displaystyle}$.

\begin{thm}\label{mzweistrich} The formal system $[M'';\mathcal{L}'']$
with $M''=[A'';P;B'']$ and $\mathcal{L}''=\hat{\mathcal{L}}$
is a complete Henkin system, i.e. we have
\begin{itemize}
\item[(a)] For $x \in X$ and every formula $F$ in 
$[M'';\mathcal{L}'']$ with 
$\mbox{free}(F) \subseteq \{x\}$ we have a constant symbol 
$\kappa \in \mathcal{L}''_*=\hat{\mathcal{L}}_*$ such that
\begin{equation*}
\leftrightarrow \,\exists x F~F\frac{\kappa}{x}~\in \Pi(M'';\mathcal{L}'')\,.
\end{equation*}
\item[(b)] For every formula $G$ in $[M'';\mathcal{L}'']$
with $\mbox{free}(G)=\emptyset$ we have
\begin{equation*}
G~\notin \Pi(M'';\mathcal{L}'') \Leftrightarrow
\neg G \in \Pi(M'';\mathcal{L}'')\,.
\end{equation*}
\end{itemize}
\end{thm}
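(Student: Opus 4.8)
The plan is to reduce both statements to a single semantic identity: a formula of $[M'';\mathcal{L}'']$ is provable exactly when it is valid in $\mathcal{D}$. First I would record the bookkeeping that makes this work. Because $A''=\hat{A}$ already contains all names in $\mathcal{N}$, and $\hat{\mathcal{L}}$ is by construction closed under substitution of names, forming the extended (hatted) system of $[M'';\mathcal{L}'']$ reproduces the same system: no fresh names need be adjoined, so $\widehat{M''}=M''$ and $\widehat{\mathcal{L}''}=\hat{\mathcal{L}}=\mathcal{L}''$, and in particular $\widehat{\mathcal{L}''}_*=\hat{\mathcal{L}}_*$. Since every $G\in B''$ is valid in $\mathcal{D}$ by the very definition of $B''$, the map $\mathcal{D}$ is a model for $[M'';\mathcal{L}'']$, and Lemma \ref{correct} applies without change. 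This gives one direction: $F\in\Pi(M'';\mathcal{L}'')$ implies that $F$ is valid in $\mathcal{D}$. The converse is immediate, since a formula valid in $\mathcal{D}$ lies in $B''$, is therefore a basis axiom, and is hence provable by Rule~(a). Thus for every formula $F$ in $[M'';\mathcal{L}'']$ one has the key equivalence: $F\in\Pi(M'';\mathcal{L}'')$ if and only if $F$ is valid in $\mathcal{D}$.

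Given this equivalence, part (b) is a short computation. For a closed formula $G$, validity simply means $\mathcal{D}(G)=\top$, so $G\notin\Pi(M'';\mathcal{L}'')$ is equivalent to $\mathcal{D}(G)=\bot$, which by Condition~6.3 is equivalent to $\mathcal{D}(\neg G)=\top$, i.e.\ to $\neg G$ being valid in $\mathcal{D}$, and hence by the key equivalence to $\neg G\in\Pi(M'';\mathcal{L}'')$.

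For part (a) I first observe that the formula $\leftrightarrow\exists x F\,F\frac{\kappa}{x}$ is closed whenever $\mbox{free}(F)\subseteq\{x\}$ and $\kappa\in\hat{\mathcal{L}}_*$, so by the key equivalence it suffices to pick $\kappa$ making this biconditional valid in $\mathcal{D}$, that is, making the truth values $\mathcal{D}(\exists x F)$ and $\mathcal{D}(F\frac{\kappa}{x})$ agree. I would split on the value of $\mathcal{D}(\exists x F)$. If $\mathcal{D}(\exists x F)=\top$, then by Condition~6.4 there is a witness $\lambda\in\hat{\mathcal{L}}_*$ with $\mathcal{D}(F\frac{\lambda}{x})=\top$; taking $\kappa=\lambda$ (or, if a name is preferred, $\kappa=\alpha_{\mathcal{D}(\lambda)}$ via Lemma \ref{structure_lemma}(b)) makes both sides $\top$. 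If $\mathcal{D}(\exists x F)=\bot$, then by Condition~6.4 no $\lambda\in\hat{\mathcal{L}}_*$ satisfies $\mathcal{D}(F\frac{\lambda}{x})=\top$, so every $\kappa\in\hat{\mathcal{L}}_*$ gives $\mathcal{D}(F\frac{\kappa}{x})=\bot$; since $\hat{\mathcal{L}}_*\neq\emptyset$ (the names already lie in it), any such $\kappa$ works. In either case the biconditional is valid, hence provable.

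The main obstacle is not the semantic verification, which is routine, but the bookkeeping in the first paragraph: one must check carefully that $\mathcal{D}$ genuinely serves as a model for $[M'';\mathcal{L}'']$ and that its extended system coincides with $[M'';\mathcal{L}'']$ itself, so that Lemma \ref{correct} can be invoked with no re-introduction of names. This idempotence of the hat construction, which holds precisely because the names were absorbed into $A''$ and $\hat{\mathcal{L}}$ is closed under name substitution, is what collapses the whole theorem to the single equivalence above.
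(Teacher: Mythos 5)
Your overall strategy is sound and, for part (a), essentially coincides with the paper's: the paper also argues semantically that $\to\,\exists x F~F\frac{\kappa}{x}$ is valid in $\mathcal{D}$ (with $\kappa=\alpha_{\mathcal{D}(\lambda)}$ for a witness $\lambda$ when $\mathcal{D}(\exists x F)=\top$, and any name otherwise), hence lies in $B''$ and is provable, obtaining the converse implication from \cite[(3.19)]{Ku}; your variant of placing the biconditional itself into $B''$ is equally good. The genuine difference lies in how you obtain the soundness direction ``provable in $[M'';\mathcal{L}'']$ implies valid in $\mathcal{D}$'', which both you and the paper need in part (b) to rule out that $G$ and $\neg G$ are simultaneously provable. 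You propose to apply Lemma \ref{correct} directly to $[M'';\mathcal{L}'']$, on the grounds that the hat construction is idempotent because $\mathcal{N}\subseteq A''$. That claim does not literally square with the paper's definitions: Condition (2) of a model requires the names to be \emph{new} symbols, distinct from all symbols of the system being modelled, so a model of $[M'';\mathcal{L}'']$ must carry a fresh disjoint copy of names; one would then have to extend $\mathcal{D}$ to the enlarged set of closed lists and re-verify Conditions (1)--(8), in particular that every $F\in B''$ stays valid when the new names are substituted (which needs the analogue of Lemma \ref{structure_lemma}(b)). This is repairable, but it is real work rather than bookkeeping, and Lemma \ref{correct} as stated gives soundness only for $[\hat{M};\hat{\mathcal{L}}]$, whose basis axioms are still $B$ and not $B''$.

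The paper sidesteps this entirely. To show consistency of $[M'';\mathcal{L}'']$ it takes a putative proof of $\neg\forall x \sim x,x$, uses the Deduction Theorem \cite[(4.3)]{Ku} to extract finitely many $F_1,\ldots,F_k\in B''$ with $\to \mbox{gen}(F_1)\ldots\to\mbox{gen}(F_k)~\neg\forall x\sim x,x\in\Pi(\hat{M};\hat{\mathcal{L}})$, and then applies Lemma \ref{correct} in exactly the setting where it is proved; since each $\mbox{gen}(F_j)$ is valid, the false conclusion would have to be valid, a contradiction. Running that same Deduction-Theorem argument with an arbitrary provable formula in place of $\neg\forall x\sim x,x$ recovers your ``key equivalence'' without ever deciding whether $\mathcal{D}$ is a model of $[M'';\mathcal{L}'']$. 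I would recommend either adopting that route or spelling out the fresh-names verification; as written, the first paragraph of your proposal asserts the crucial step rather than proving it.
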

\begin{proof}
(a) We know that $\exists x F$, $F\frac{\kappa}{x}$ and 
$\to\,\exists x F~F\frac{\kappa}{x}$ are closed formulas 
in $[\hat{M};\hat{\mathcal{L}}]$ for each constant $\kappa \in \hat{\mathcal{L}}_*$.
If $\mathcal{D}(\exists x F)=\top$, then we have $\lambda \in \hat{\mathcal{L}}_*$
with $\mathcal{D}(F \frac{\lambda}{x})=\top$\,.
We see from Lemma \ref{structure_lemma}(b) with the constant symbol
$\kappa = \alpha_{\mathcal{D}(\lambda)}\in \mathcal{N} \subseteq \hat{\mathcal{L}}_*$ that
\begin{equation*}
\mathcal{D}\left(F \frac{\lambda}{x}\right)=
\mathcal{D}\left(F \frac{\kappa}{x} \right)=\top \,.
\end{equation*}
We obtain 
$\begin{displaystyle} 
\mathcal{D}\left(\to\,\exists x F~F\frac{\kappa}{x}\right)=\top
\end{displaystyle}$
in this case. Otherwise we have 
$\mathcal{D}\left(\exists x F \right)=\bot$\,.
From $\mathcal{D}_* \neq \emptyset$ we can find a name $\kappa \in \mathcal{N}$
and see again $\mathcal{D}\left(\to\,\exists x F~F\frac{\kappa}{x}\right)=\top$.
We obtain from the definition of $B''$ that
$\begin{displaystyle}
\to\,\exists x F~F\frac{\kappa}{x}~\in \Pi(M'';\mathcal{L}'')
\end{displaystyle}$\,.
From \cite[(3.19) Proposition]{Ku} we also have
$\begin{displaystyle}
\to \, F\frac{\kappa}{x}\, \exists x F~\in \Pi(M'';\mathcal{L}'')
\end{displaystyle}$\,.\\
(b) Let $G$ be a formula in $[\hat{M};\hat{\mathcal{L}}]$ with
$\mbox{free}(G)=\emptyset$. For $\mathcal{D}(G)=\top$ we have
$G \in \Pi(M'';\mathcal{L}'')$ from the definition of $B''$.
If $\mathcal{D}(G)=\bot$, then $\mathcal{D}(\neg G)=\top$ and
$\neg G \in \Pi(M'';\mathcal{L}'')$. Hence
$G \in \Pi(M'';\mathcal{L}'')$ or $\neg G \in \Pi(M'';\mathcal{L}'')$.
Next we have to show that $[M'';\mathcal{L}'']$ is consistent.
If not then $\neg \forall x \sim x,x$ is provable in $[M'';\mathcal{L}'']$ 
for some $x \in X$. We put $\mbox{gen}(F)=\forall x_1 \ldots \forall x_j F$
for every formula $F$ in $[M'';\mathcal{L}'']$ 
with $\mbox{free}(F)=\{x_1,\ldots,x_j\}$ and distinct variables 
$x_1,\ldots,x_j \in X$, ordered according to their first occurrence in $F$. 
From the Deduction Theorem \cite[(4.3)]{Ku} we obtain
finitely many formulas $F_1, \ldots, F_k \in B''$ such that
\begin{equation*}
\to \mbox{gen}(F_1) \ldots \to \mbox{gen}(F_k) ~ \neg \forall x \sim x,x
~ \in \, \Pi(\hat{M};\hat{\mathcal{L}})\,.
\end{equation*}
Note that the closed formulas $\mbox{gen}(F_1), \ldots ,\mbox{gen}(F_k)$
are valid in $\mathcal{D}$. We obtain
$\begin{displaystyle}
\mathcal{D}(\mbox{gen}(F_1))= \ldots = \mathcal{D}(\mbox{gen}(F_k))
 =\top\,.
\end{displaystyle}$
It follows from Lemma \ref{correct} that 
$\begin{displaystyle}
\mathcal{D}\left(\to \mbox{gen}(F_1) \ldots \to \mbox{gen}(F_k) ~ \neg \forall x \sim x,x \right)
~ = \top
\end{displaystyle}$\,,
and we have the contradiction $\mathcal{D}\left(\neg \forall x \sim x,x \right)
~ = \top$, i.e.\\ 
$\mathcal{D}\left(\forall x \sim x,x \right)~ = \bot$.
\end{proof}
Now we extend $[M;\mathcal{L}]$ in a different way, starting from the following construction of \textit{Henkin-constants}:

\textit{Initial step:} Recall that $M=[A;P;B]$. Put $A^{(0)}=A$,
$\mathcal{L}^{(0)}=\mathcal{L}$, $M^{(0)}=M$\,. 
Let $x \in X$ and assume that $F$ is a formula in $[M^{(0)};\mathcal{L}^{(0)}]$ 
with $\mbox{free}(F) \subseteq \{x\}$. To this pair $(x;F)$ we choose exactly
one new constant $\omega_{\exists x F}$.
We call it a \textit{Henkin constant of rank 1}.\\
We put
$\begin{displaystyle}
\mathcal{H}^{(1)} = \{ c \,:\, c \mbox{~is~a~Henkin~constant~of~rank~1}\,\} 
\end{displaystyle}$\,,\\
$A^{(1)}=A^{(0)} \cup \mathcal{H}^{(1)}=A \cup \mathcal{H}^{(1)}$\,,
$M^{(1)}=[A^{(1)};P;B]$\,,
\begin{align*} {\mathcal L}^{(1)}   = 
\{\, & \lambda \frac{c_1}{x_1}...\frac{c_m}{x_m} \,\,:\,\, 
\lambda \in {\mathcal L}^{(0)},\,
x_1,...,x_m \in X,\,\\ & c_1,...,c_m \in \mathcal{H}^{(1)}
\,,\, m \geq 0\}\,.
\end{align*}

\textit{Inductive step:} Let $n \geq 2$ be a natural number.
Assume that for all $j \in \{1,\ldots,n-1\}$ we have already defined the formal mathe\-ma\-tical sys\-tems $[M^{(j)};\mathcal{L}^{(j)}]$ 
and the set $\mathcal{H}^{(j)}$
of Henkin constants of rank $j$. Let $x \in X$ and $\exists x F$ be a closed formula in $[M^{(n-1)};\mathcal{L}^{(n-1)}]$ such that at least one Henkin constant
of rank $n-1$ occurs in $F$. Then we form the new 
Henkin constant $\omega_{\exists x F}$ of rank $n$. 
By $\mathcal{H}^{(n)}$ we denote the set of all these Henkin constants of rank $n$. 
We use the notation $M^{(j)}=[A^{(j)};P;B]$ for $j=1,\ldots,n-1$ and put
$A^{(n)}=A^{(n-1)} \cup \mathcal{H}^{(n)}$, assuming that 
$A^{(n-1)} \cap \mathcal{H}^{(n)}=\emptyset$, $M^{(n)}=[A^{(n)};P;B]$ and
\begin{align*} {\mathcal L}^{(n)}   = 
\{\, & \lambda \frac{c_1}{x_1}...\frac{c_m}{x_m} \,\,:\,\, 
\lambda \in {\mathcal L}^{(n-1)},\,
x_1,...,x_m \in X,\,\\ & c_1,...,c_m \in \mathcal{H}^{(n)}
\,,\, m \geq 0\}\,.
\end{align*}
We define $\begin{displaystyle}
A^{(\infty)}=\bigcup \limits_{n=0}^{\infty} A^{(n)}
\end{displaystyle}$, $\begin{displaystyle}
\mathcal{L}^{(\infty)}=\bigcup \limits_{n=0}^{\infty} \mathcal{L}^{(n)}
\end{displaystyle}$ with the increasing chains
\begin{align}\label{henkin_ketten}
A = A^{(0)} \subset A^{(1)} \subset A^{(2)} \subset \ldots ~\,;~
{\mathcal L} = {\mathcal L}^{(0)} 
\subset {\mathcal L}^{(1)} 
\subset {\mathcal L}^{(2)} 
\subset \ldots ~\,.
\end{align}
With $M^{(\infty)}=[A^{(\infty)};P;B]$ we obtain a formal mathematical system
$[M^{(\infty)};\mathcal{L}^{(\infty)}]$ which results from $[M;\mathcal{L}]$
by adding all Henkin constants to $[M;\mathcal{L}]$.

Recall that $[M;\mathcal{L}]$ is countable. 
Then $[M^{(\infty)};\mathcal{L}^{(\infty)}]$ is countable too.
We summarize and keep in mind:

For $x \in X$ and every formula $F$ 
of $[M^{(\infty)};\mathcal{L}^{(\infty)}]$ 
with $\mbox{free}(F) \subseteq \{x\}$ we obtain a one-to-one correspondence between the pair $(x;F)$ and its Henkin constant $\omega_{\exists x F}$.

The set of all Henkin constants is given by 
$\begin{displaystyle}
\mathcal{H}=\bigcup \limits_{k=1}^{\infty} \mathcal{H}^{(k)} \,.
\end{displaystyle}$
Here we define a certain function $\psi : \mathcal{H} \to \mathcal{N}'$ with
a  \textit{countable subset} $\mathcal{N}' \subseteq \mathcal{N}$ of the set 
$\mathcal{N}$ of all names corresponding to the universe $\mathcal{D}_*$.
Recall that a set is countable if it is either finite or countably infinite.
We also extend the application of the function $\psi$ to all formulas of the countable formal mathematical system $[M^{(\infty)};\mathcal{L}^{(\infty)}]$.
Corresponding with the inductive construction of the Henkin extensions
$[M^{(n)};\mathcal{L}^{(n)}]$ for $n \in \setN_0$
we define a sequence $([\tilde{M}^{(n)};\tilde{\mathcal{L}}^{(n)}])_{n \in \setN_0}$
of countable subsystems of $[M'';\mathcal{L}'']$:

\textit{Initial step:} Let $\tilde{A}^{(0)}=A$ and note that 
$\begin{displaystyle}
\mathcal{L}_* = \{\,\lambda \in \mathcal{L}\,:\,\mbox{var}(\lambda)=\emptyset\,\}
\end{displaystyle}$
and $A$ are countable. We put
\begin{align*}
& \tilde{M}^{(0)}=[\tilde{A}^{(0)};P;\tilde{B}^{(0)}] \quad \mbox{with}\\
& \tilde{B}^{(0)} = \{ G \,:\, G \mbox{~is~a~formula~of~} [M;\mathcal{L}]
\mbox{~which~is~valid~in~} \mathcal{D}\,\} \quad \mbox{and}\\
&\tilde{\mathcal{L}}^{(0)}=\mathcal{L}\,.
\end{align*}
We define $\psi(F)=F$ for every formula $F$ 
of $[M^{(0)};\mathcal{L}^{(0)}]=[M;\mathcal{L}]$
from the initial step of the Henkin extensions.
Note that $[M'';\mathcal{L}'']$ in Theorem \ref{mzweistrich}
is an extension of $[\tilde{M}^{(0)};\tilde{\mathcal{L}}^{(0)}]$.

\textit{Inductive step:} 
\begin{itemize}
\item Suppose that for a given natural number $n \in \setN$ and for every 
$j \in \{0,\ldots,n-1\}$ we have already given the formal mathematical systems
$[\tilde{M}^{(j)};\tilde{\mathcal{L}}^{(j)}]$ 
with $\tilde{M}^{(j)}=[\tilde{A}^{(j)};P;\tilde{B}^{(j)}]$ and that
$[M'';\mathcal{L}'']$ in Theorem \ref{mzweistrich}
is an extension of $[\tilde{M}^{(j)};\tilde{\mathcal{L}}^{(j)}]$
for all $j \in \{0,\ldots,n-1\}$.
\item Suppose that the formula $\psi(F)$ of
$[\tilde{M}^{(n-1)};\tilde{\mathcal{L}}^{(n-1)}]$
is already defined for every formula $F$ of $[M^{(n-1)};\mathcal{L}^{(n-1)}]$.
\item Suppose that $\psi(c)$ is already defined for all Henkin constants $c$
of rank $<n$. \end{itemize}
Now let $\omega_{\exists x F}$ be a Henkin constant of rank $n$. 
Due to Theorem \ref{mzweistrich} and Lemma \ref{structure_lemma}
we can choose for each such formula $\exists x F$ exactly one individual 
$d \in \mathcal{D}_*$ with
\begin{align*}
\leftrightarrow \exists x \psi(F) \psi(F)\frac{\alpha_d}{x} 
\in \Pi(M'';\mathcal{L}'')\,,
\end{align*}
and we put $\psi\left(\omega_{\exists x F}\right)=\alpha_d$.
Now $\psi(c) \in \mathcal{N}$ is defined 
for all Henkin constants $c$ of rank $\leq n$.
Next we have to extend the definition of $\psi(F)$ to every formula $F$ of
$[M^{(n)};\mathcal{L}^{(n)}]$ which contains at least one Henkin constant 
of rank $n$ as a sublist: Let $c_1,\ldots,c_j$ with $j \geq 1$ be the complete list
of distinct Henkin constants of rank $n$ in $F$, ordered according to their first occurrence in $F$, and let $x_1,\ldots,x_j \in X$ be new distinct variables with
$x_1,\ldots,x_j \notin \mbox{var}(F)$\,.
Let the formula $G$ of $[M^{(n-1)};\mathcal{L}^{(n-1)}]$ result from $F$
if we replace $c_1,\ldots,c_j$ everywhere in $F$ by $x_1,\ldots,x_j$, respectively.
Then we put
\begin{align*}
\psi(F)=\psi \left( G \frac{c_1}{x_1} \ldots \frac{c_j}{x_j}\right)
=\psi\left( G \right) \frac{\psi(c_1)}{x_1} \ldots \frac{\psi(c_j)}{x_j}\,.
\end{align*}
Let $\begin{displaystyle}
\tilde{A}^{(n)}=\{\,\psi(c)\,:\,
c \mbox{~is~a~Henkin~constant~of~rank~} n\,\} 
\cup \tilde{A}^{(n-1)}
\end{displaystyle}$
and extend $[\tilde{M}^{(n-1)};\tilde{\mathcal{L}}^{(n-1)}]$ 
to the new formal mathematical system
$[\tilde{M}^{(n)};\tilde{\mathcal{L}}^{(n)}]$ with
$\begin{displaystyle}
\tilde{M}^{(n)}=[\tilde{A}^{(n)};P;\tilde{B}^{(n)}],
\end{displaystyle}$ and
\begin{align*}
& \tilde{B}^{(n)}=\{\,\psi\left( G \right)\,:\,
G \mbox{~is~a~formula~of~} [M^{(n)};\mathcal{L}^{(n)}]
\mbox{~and~} \psi(G) \mbox{~is~valid~in~} \mathcal{D}\,\} \,,\\
& \tilde{{\mathcal L}}^{(n)}   = 
\{\,\lambda \frac{\kappa_1}{x_1}...\frac{\kappa_m}{x_m} \,\,:\,\, 
\lambda \in \tilde{{\mathcal L}}^{(n-1)},\,
x_1,...,x_m \in X,\,\\ & \quad \quad \quad \quad
\kappa_1,...,\kappa_m \in \tilde{A}^{(n)} \setminus A
\,,\, m \geq 0\}\,.
\end{align*}
Recall that $\mathcal{N} \cap A = \emptyset$. Now $\psi(c) \in \mathcal{N}$
is defined for all Henkin constants $c$ of rank $\leq n$, and we see that
the formal system $[\tilde{M}^{(n)};\tilde{\mathcal{L}}^{(n)}]$ is an extension
of the formal systems $[\tilde{M}^{(j)};\tilde{\mathcal{L}}^{(j)}]$ 
with index $j<n$. Finally, the formal system $[M'';\mathcal{L}'']$
is an extension of $[\tilde{M}^{(n)};\tilde{\mathcal{L}}^{(n)}]$.

This concludes the inductive definition of $\psi(c) \in \mathcal{N}$
for all Henkin constants $c$ as well as the definition of
the formal systems $[\tilde{M}^{(n)};\tilde{\mathcal{L}}^{(n)}]$
for all $n \in \setN_0$. For all $n \in \setN$ 
and for every formula $F$ of $[M^{(n-1)};\mathcal{L}^{(n-1)}]$
we obtain a well defined formula $\psi(F)$ of
$[\tilde{M}^{(n-1)};\tilde{\mathcal{L}}^{(n-1)}]$,
and if in addition $\mbox{free}(F) \subseteq \{x\}$, then 
\begin{equation}\label{psi}
\left\{
\begin{split}
& \psi\left(\leftrightarrow \exists x F \, F\frac{c}{x}\right) ~=~\,
\leftrightarrow \exists x \, \psi(F) \, \psi(F)\frac{\psi(c)}{x} \mbox{~with~}\\
& 
\psi\left(\leftrightarrow \exists x F \, F\frac{c}{x}\right) \in \tilde{B}^{(n)}
\,,\mbox{~if~} c=\omega_{\exists x F} \mbox{~has~rank~} \leq n\,.
\end{split}
\right.
\end{equation}
Now we can define the following countable sets:
\begin{equation}\label{down_ketten}
\left\{
\begin{split}
A'&=\bigcup \limits_{n=0}^{\infty} \tilde{A}^{(n)}\,,\quad
\mathcal{N}'=A' \setminus A \subseteq \mathcal{N}\,,\\
B'&=\bigcup \limits_{n=0}^{\infty} \tilde{B}^{(n)}\,,\quad
\mathcal{L}'=\bigcup \limits_{n=0}^{\infty} \tilde{\mathcal{L}}^{(n)}\,.
\end{split}
\right.
\end{equation}
There results a formal mathematical system $[M';\mathcal{L}']$
with $M'=[A';P;B']$ which extends the original system $[M;\mathcal{L}]$.
It follows from our inductive construction 
that $\psi : \mathcal{H} \to \mathcal{N}'$ is surjective, and its extension
assigns to each formula $F$ of $[M^{(\infty)};\mathcal{L}^{(\infty)}]$
a well-defined formula $\psi(F)$ of $[M';\mathcal{L}']$.

\begin{thm}\label{DLST} {\bf Downward L\"owenheim-Skolem Theorem}
\begin{itemize}
\item[(a)] $[M';\mathcal{L}']$ is a complete countable Henkin system, and it extends the original countable system $[M;\mathcal{L}]$.
\item[(b)] The complete Henkin system $[M'';\mathcal{L}'']$
is a conservative extension of $[M';\mathcal{L}']$.
\item[(c)] For the model $\mathcal{D}$ of $[M;\mathcal{L}]$
we use the notations and Conditions (1)-(8) listed in Section \ref{models}.
\begin{itemize}
\item[1.] We form the countable subset 
$$\mathcal{D}_{0,*}=\{\,d\in \mathcal{D}_*\,:\,
\alpha_d \in \mathcal{N}'\,\} \subseteq \mathcal{D}_*$$ 
and note that $\mathcal{L}'_* \subseteq \mathcal{L}''_* 
= \hat{\mathcal{L}}_*$.
\item[2.] We define the restriction 
$\mathcal{D}_{0} : \mathcal{L}'_* \to \mathcal{D}_{0,*}$
of $\mathcal{D}$ to $\mathcal{L}'_*$ by 
$\mathcal{D}_{0}(\lambda)=\mathcal{D}(\lambda)$
for all $\lambda \in \mathcal{L}'_*$.
\item[3.] We put $p_0'=p_0$ and $p_n'=p_n \cap \mathcal{D}_{0,*}^{n}$ 
for all $n \geq 1$ with the predicates $p_0$ and $p_n$ from
Condition (5) in Section \ref{models}.
We use the predicates $p'_n$ for all $n \geq 0$ to extend $\mathcal{D}_{0}$
to all closed formulas $F$ of $[M';\mathcal{L}']$ by 
$\mathcal{D}_{0}(F)=\mathcal{D}(F)$.
\end{itemize}
Then $\mathcal{D}_{0}$ is a well defined countable model of $[M;\mathcal{L}]$,
and the models $\mathcal{D}$ and $\mathcal{D}_{0}$ are elementarily equivalent 
for $[M;\mathcal{L}]$. Here the names of $\mathcal{D}_{0}$ are the members
of the countable set ${\mathcal N}'$, where $\alpha_d \in {\mathcal N}'$
denotes the same individual $d \in \mathcal{D}_{0,*}$ in both models.
\end{itemize}
\end{thm}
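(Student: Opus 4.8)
The plan is to reduce all three parts to a single \emph{absoluteness statement}: for every closed formula $F$ of $[M';\mathcal{L}']$ one has $\mathcal{D}_0(F)=\mathcal{D}(F)$. Once this is available, part (c) is immediate — $\mathcal{D}_0$ inherits validity of the basis axioms in $B$ from $\mathcal{D}$, it is countable because $\mathcal{N}'$ is, and elementary equivalence for $[M;\mathcal{L}]$ is exactly the assertion $\mathcal{D}_0(F)=\mathcal{D}(F)$ on the closed formulas of the smaller language. The two preliminary parts (a) and (b) are essentially bookkeeping around Lemma \ref{correct} together with the one structural identity \eqref{psi}, and they supply the Henkin hypothesis needed to run the absoluteness induction.

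First I would record two facts that follow directly from the construction \eqref{down_ketten}: that $[M'';\mathcal{L}'']$ is an extension of $[M';\mathcal{L}']$ (already noted stepwise in the inductive definition of the $[\tilde{M}^{(n)};\tilde{\mathcal{L}}^{(n)}]$), and that $B'$ contains every formula of $[M';\mathcal{L}']$ valid in $\mathcal{D}$, since such a formula is $\psi(G)$ for some $G$ of $[M^{(n)};\mathcal{L}^{(n)}]$ and hence lies in $\tilde{B}^{(n)}$. Conservativity (b) is then immediate: if $F$ is an $[M';\mathcal{L}']$-formula with $F\in\Pi(M'';\mathcal{L}'')$, then $F$ is valid in $\mathcal{D}$ by Lemma \ref{correct}, so $F\in B'\subseteq\Pi(M';\mathcal{L}')$. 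The same two facts give completeness in (a): for closed $G$ either $\mathcal{D}(G)=\top$, forcing $G\in B'$, or $\mathcal{D}(\neg G)=\top$, forcing $\neg G\in B'$, so at least one of $G,\neg G$ is provable; consistency — hence that not both are provable — follows because any proof of $\neg\forall x\sim x,x$ would, via the extension to $[M'';\mathcal{L}'']$ and Lemma \ref{correct}, contradict $\mathcal{D}(\forall x\sim x,x)=\top$, exactly as in Theorem \ref{mzweistrich}(b). For the Henkin property I would use that $\psi$ maps the formulas of $[M^{(\infty)};\mathcal{L}^{(\infty)}]$ onto those of $[M';\mathcal{L}']$ (choose Henkin-constant preimages of the names in $\mathcal{N}'$): writing $F=\psi(F^\ast)$ with $\mbox{free}(F^\ast)\subseteq\{x\}$ and taking $c=\omega_{\exists x F^\ast}$, the identity \eqref{psi} yields $\leftrightarrow\exists x F\,F\frac{\kappa}{x}\in B'\subseteq\Pi(M';\mathcal{L}')$ with the witness $\kappa=\psi(c)\in\mathcal{N}'\subseteq\mathcal{L}'_\ast$.

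Next I would prove the absoluteness lemma by induction on $\mbox{deg}(F)$. Prime formulas are clear: equations depend only on the values $\mathcal{D}(\lambda)$, which $\mathcal{D}_0$ shares, and for $p\,\lambda_1,\ldots,\lambda_n$ the restriction $p'_n=p_n\cap\mathcal{D}_{0,\ast}^{\,n}$ changes nothing once one knows every $\mathcal{D}(\lambda_i)$ lies in $\mathcal{D}_{0,\ast}$. The connective cases are routine from the propositional clauses in Condition (6). The only real point is the existential (equivalently universal) step, where the ranges differ: $\mathcal{D}_0$ quantifies over $\mathcal{L}'_\ast$ while $\mathcal{D}$ quantifies over the larger $\hat{\mathcal{L}}_\ast$. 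The forward direction is trivial since $\mathcal{L}'_\ast\subseteq\hat{\mathcal{L}}_\ast$; for the backward direction, given $\mathcal{D}(\exists x F)=\top$ I would invoke the Henkin property of part (a): there is $\kappa\in\mathcal{L}'_\ast$ with $\leftrightarrow\exists x F\,F\frac{\kappa}{x}\in\Pi(M';\mathcal{L}')\subseteq\Pi(M'';\mathcal{L}'')$, so this biconditional is valid in $\mathcal{D}$ by Lemma \ref{correct}, whence $\mathcal{D}(F\frac{\kappa}{x})=\top$; the induction hypothesis then gives $\mathcal{D}_0(F\frac{\kappa}{x})=\top$ and so $\mathcal{D}_0(\exists x F)=\top$. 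This Tarski--Vaught style step is where I expect the main difficulty to sit, and it is the sole place the Henkin construction is actually used.

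Finally I would close the loop on part (c). Well-definedness of $\mathcal{D}_0:\mathcal{L}'_\ast\to\mathcal{D}_{0,\ast}$ — that $\mathcal{D}(\lambda)\in\mathcal{D}_{0,\ast}$ for every $\lambda\in\mathcal{L}'_\ast$ — follows by the same Henkin trick applied to $\sim x,\lambda$: the witness $\kappa\in\mathcal{N}'$ satisfies $\mathcal{D}(\kappa)=\mathcal{D}(\lambda)$, so $\alpha_{\mathcal{D}(\lambda)}=\kappa\in\mathcal{N}'$; surjectivity onto $\mathcal{D}_{0,\ast}$ is clear from $\mathcal{D}_0(\alpha_d)=d$ (Lemma \ref{structure_lemma}(a)). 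One then checks Conditions (1)--(6) for $\mathcal{D}_0$ directly — Condition (4) transfers from $\mathcal{D}$ because $\alpha_{\mathcal{D}_0(\mu)}\in\mathcal{N}'$, and Condition (6) is precisely the content of the absoluteness lemma — while Condition (8) holds because each $F\in B$ is valid in $\mathcal{D}$ and hence, by the lemma applied to the closed instances $F\frac{\lambda_1}{x_1}\ldots\frac{\lambda_m}{x_m}$ with $\lambda_i\in\mathcal{L}'_\ast$, valid in $\mathcal{D}_0$. Countability of $\mathcal{D}_{0,\ast}$ and elementary equivalence for $[M;\mathcal{L}]$ then read off immediately, completing parts (a)--(c).
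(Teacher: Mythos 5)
Your proposal is correct and follows essentially the same route as the paper: the same bookkeeping via $B'$, $B''$ and Lemma \ref{correct} for parts (a) and (b), and the same Tarski--Vaught step using the Henkin witnesses $\psi(\omega_{\exists x F})\in\mathcal{N}'$ from \eqref{psi} to handle the quantifier clause in part (c). The only difference is presentational: you package the verification of Condition (6) as a single explicit absoluteness induction on $\mbox{deg}(F)$, whereas the paper distributes the same content over the case-by-case check of Conditions 6.1--6.4.
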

\begin{proof}
(a) We have $A = \tilde{A}^{(0)}$, $B \subseteq \tilde{B}^{(0)}$, 
$\mathcal{L} = \tilde{\mathcal{L}}^{(0)}$ and hence 
$$
\Pi(M;\mathcal{L})
\subseteq \Pi(\tilde{M}^{(0)};\tilde{\mathcal{L}}^{(0)})\,.
$$
It follows from the inductive definition of $[M';\mathcal{L}']$
that 
$$\tilde{A}^{(n)} \subseteq \tilde{A}^{(n+1)} \subseteq A'\,, \quad 
\tilde{B}^{(n)} \subseteq \tilde{B}^{(n+1)} \subseteq B'\,, \quad
\tilde{\mathcal{L}}^{(n)} \subseteq \tilde{\mathcal{L}}^{(n+1)} 
\subseteq \mathcal{L}'$$
and hence 
$
\Pi(\tilde{M}^{(n)};\tilde{\mathcal{L}}^{(n)})
\subseteq \Pi(\tilde{M}^{(n+1)};\tilde{\mathcal{L}}^{(n+1)})
$
for all $n \in \setN_0$. We see that $[M';\mathcal{L}']$
is a countable extension of $[M;\mathcal{L}]$.
Due to the definition of $B'$ we have 
$F' \in B' \Rightarrow F' \in \Pi(M'';\mathcal{L}'')$.
We see that $[M'';\mathcal{L}'']$ is a consistent extension of 
$[M';\mathcal{L}']$, see Theorem \ref{mzweistrich}(b).
Hence $[M';\mathcal{L}']$ is consistent too.

Let $F'$ be a formula of $[M';\mathcal{L}']$ 
with $\mbox{free}(F') \subseteq \{x\}$, $x \in X$.
For the complete list of names $\kappa_1,\ldots,\kappa_j \in \mathcal{N}'$
occurring in $F'$ we can choose
Henkin constants $c_1,\ldots,c_j$ (including $j=0$) with
$\psi(c_l)=\kappa_l$, $l=1,\ldots,j$, and we obtain a formula $F$ from $F'$
if we replace $\kappa_1,\ldots,\kappa_j$ everywhere in $F'$ by $c_1,\ldots,c_j$,
respectively. For $j=0$ we put $n=0$, for $j \geq 1$ let $n$ be the maximal rank
of the Henkin constants $c_1,\ldots,c_j$. Then $F$ is a formula
of $[M^{(n)};\mathcal{L}^{(n)}]$ with  $F'=\psi(F)$ and
$\mbox{free}(F) \subseteq \{x\}$. It follows from \eqref{psi} that
\begin{align*}
\leftrightarrow \exists x \, F' \, F' \frac{\psi(\omega_{\exists x F})}{x}
\in B'
\end{align*}
with $\psi(\omega_{\exists x F}) \in \mathcal{N}'$, and 
$[M';\mathcal{L}']$ is a consistent Henkin system.

Let $G'$ be a formula of $[M';\mathcal{L}']$ with $\mbox{free}(G')=\emptyset$.
Then we can find $n \in \setN_0$ and a formula $G$ in $[M^{(n)};\mathcal{L}^{(n)}]$ with  $G'=\psi(G)$ and $\mbox{free}(G)=\emptyset$. Recall that $G'$ is also
a formula of $[M'';\mathcal{L}'']$ as well as in $[\hat{M}; \hat{\mathcal{L}}]$.
If $\mathcal{D}(G')=\top$, then $G' \in \tilde{B}^{(n)} \subseteq B'$,
hence $G' \in \Pi(M';\mathcal{L}')$.
Otherwise $\mathcal{D}(\neg G')=\top$ and
$\neg G' \in \tilde{B}^{(n)} \subseteq B'$, hence
$\neg G' \in \Pi(M';\mathcal{L}')$. We see that
$[M';\mathcal{L}']$ is a complete Henkin system.

(b) Let $G'$ be a formula of $[M';\mathcal{L}']$ which is provable in 
$[M'';\mathcal{L}'']$ and let $G$ be a formula of
$[M^{(\infty)};\mathcal{L}^{(\infty)}]$ with $G'=\psi(G)$. 
Then we have variables $x_1,\ldots, x_j \in X$ with
$$ \mbox{free}(G')=\mbox{free}(G)= \{x_1,\ldots, x_j\}\,.$$
Assume that $G' \notin \Pi(M';\mathcal{L}')$.
Then $\forall x_1 \ldots \forall x_j G' \in \Pi(M'';\mathcal{L}'')$,
but $\forall x_1 \ldots \forall x_j G' \notin \Pi(M';\mathcal{L}')$.
We obtain $\mathcal{D}(\forall x_1 \ldots \forall x_j G')=\top$
from the definition of $B''$, and hence
$$
\mathcal{D}(\psi(\forall x_1 \ldots \forall x_j G))=
\mathcal{D}(\forall x_1 \ldots \forall x_j G')=\top\,,
$$
$\forall x_1 \ldots \forall x_j G'\in B'$
from the definition of $B'$, which contradicts our assumption.
We have shown (b).

(c) First we have to show that the restriction $\mathcal{D}|_{\mathcal{L}'_*}$
of the function $\mathcal{D} : \mathcal{L}''_* \to \mathcal{D}_*$ 
to $\mathcal{L}'_*$ is a surjective function 
$\mathcal{D}_0 : \mathcal{L}'_* \to \mathcal{D}_{0,*}$ \,,
i.e. that the range of $\mathcal{D}|_{\mathcal{L}'_*}$ is $\mathcal{D}_{0,*}$:
We prescribe $\mu' \in \mathcal{L}'_*$, choose $x \in X$ and put 
$G'=\exists x \sim \mu',x$. Then $G'$ is a closed formula of $[M';\mathcal{L}']$,
and we can find a corresponding closed formula $G = \exists x \sim \mu,x$
of $[M^{(\infty)};\mathcal{L}^{(\infty)}]$ with $\mu \in \mathcal{L}^{(\infty)}_*$
and $G'=\psi(G)$.
We put $c = \omega_{\exists x \sim \mu,x}$ and have 
$\psi(c) = \alpha_d \in \mathcal{N}'$ for some $d \in \mathcal{D}_{0,*}$,
which is uniquely determined from $c$ via the surjective function
$\psi : \mathcal{H} \to \mathcal{N}'$. We have defined the function $\psi$
in such a way that we obtain with $F = \sim \mu,x$ from \eqref{psi}:
$$
\mathcal{D}\left(\leftrightarrow 
\exists x \sim \mu',x
\sim \mu',\psi(c)
\right)= \top\,.
$$
We use 
$\begin{displaystyle}
\mathcal{D}\left(
\exists x \sim \mu',x
\right)= \top 
\end{displaystyle}$
and see that
$$
\mathcal{D}\left(\sim \mu',\psi(c)\right)= 
\mathcal{D}\left(\sim \mu',\alpha_d \right)=\top \,,
$$
i.e. 
$\begin{displaystyle}
\mathcal{D}_0\left(\mu'\right)= \mathcal{D}\left(\mu'\right)= 
\mathcal{D}\left(\alpha_d\right)= d \in \mathcal{D}_{0,*}\,.
\end{displaystyle}$
On the other hand we have
$\begin{displaystyle}
\mathcal{D}_0\left(\alpha_d \right)= \mathcal{D}\left(\alpha_d\right)= d 
\end{displaystyle}$
for any given $d \in \mathcal{D}_{0,*}$. 
Hence the range of $\mathcal{D}|_{\mathcal{L}_*'}$ is given by $\mathcal{D}_{0,*}$.
Now $\mathcal{D}_0 : \mathcal{L}'_* \to \mathcal{D}_{0,*}$
with $\begin{displaystyle}
\mathcal{D}_0\left(\mu'\right)= \mathcal{D}\left(\mu'\right)
\end{displaystyle}$
is a well-defined surjective function, and we have
\begin{equation*}
\begin{split}
& \mathcal{D}_0\left( \lambda' \frac{\mu'}{x}\right)=
\mathcal{D}\left( \lambda' \frac{\mu'}{x}\right)\\
=~& \mathcal{D}\left( \lambda' \frac{\alpha_{\mathcal{D}(\mu')}}{x}\right)=
\mathcal{D}_0\left( \lambda' \frac{\alpha_{\mathcal{D}(\mu')}}{x}\right)\\
\end{split}
\end{equation*}
for all $\lambda' \in \mathcal{L}'$ with 
$\mbox{var}(\lambda') \subseteq \{x\}$
and for all $\mu' \in \mathcal{L}'_*$
from the corres\-pon\-ding property of the model $\mathcal{D}$.
We have added the constant symbols in the set $\mathcal{N}$ of names
to $[M;\mathcal{L}]$ and obtained the formal mathematical system 
$$
[\hat{M};\hat{\mathcal{L}}]=[\,[A'';P;B];\mathcal{L}'']\,.
$$
In the same way we obtain 
the formal mathematical subsystem 
$$
[\,[A';P;B];\mathcal{L}']\,,
$$
if we only add the names from the subset $\mathcal{N}' \subseteq \mathcal{N}$
to $[M;\mathcal{L}]$. This subsystem plays the same role
for the desired submodel $\mathcal{D}_0$ as $[\hat{M};\hat{\mathcal{L}}]$
plays for the model $\mathcal{D}$ of $[M;\mathcal{L}]$\,,
see Condition (3) in Section \ref{models}.
Recall that
$\begin{displaystyle}
\mathcal{D}_0\left(\alpha_d \right)= \mathcal{D}\left(\alpha_d\right)= d 
\end{displaystyle}$
for any given $d \in \mathcal{D}_{0,*}$.
Hence we use the same names for the same individuals 
in our subsystem, and we have proved that $\mathcal{D}_0$
satisfies Conditions (1)-(4) in Section \ref{models}
required for a model of $[M;\mathcal{L}]$. 
We are checking the remaining Conditions (5)-(8):

(5) We have already defined $p_0' = p_0$ 
and $p_n' = p_n \cap \mathcal{D}_{0,*}^n$ for all $n \in \setN$
and use them first for the interpretation of prime formulas of $[M';\mathcal{L}']$.
Note that the formulas of $[M';\mathcal{L}']$ and 
$
[\,[A';P;B];\mathcal{L}']
$
are the same.

(6) We have \textit{defined} $\mathcal{D}_0(F')=\mathcal{D}(F')$
for every closed formula $F'$ of $[M';\mathcal{L}']$.
Since $\mathcal{D}_0(\lambda')=\mathcal{D}(\lambda') \in \mathcal{D}_{0,*}$
for all $\lambda' \in \mathcal{L}_*'$, we see that for all 
$\lambda',\mu',\lambda_1',\ldots,\lambda_n' \in \mathcal{L}_*'$
with $n \in \setN$:
%This extension is also denoted by $\mathcal{D}$ and is given by
\begin{itemize}
\item[6.1] $\begin{displaystyle}
\mathcal{D}_0(\sim \lambda',\mu')
=\mathcal{D}(\sim \lambda',\mu')=\top \Leftrightarrow
\mathcal{D}_0(\lambda')=\mathcal{D}_0(\mu') \in \mathcal{D}_{0,*}\,.
\end{displaystyle}$ 
\item[6.2] For all $p \in P$ we have
$\mathcal{D}(p)=\mathcal{D}_0(p)=p_0 \in \{\top ,\bot\}$ 
and\\
$\begin{displaystyle}
\mathcal{D}_0(p \, \lambda'_1,\ldots,\lambda'_n)=\top \Leftrightarrow
\mathcal{D}(p \, \lambda'_1,\ldots,\lambda'_n)=\top \Leftrightarrow
\end{displaystyle}$ \\
$\begin{displaystyle}
(\mathcal{D}(\lambda'_1),\ldots,\mathcal{D}(\lambda'_n)) \in p_n \Leftrightarrow
\end{displaystyle}$ \\
$\begin{displaystyle}
(\mathcal{D}_0(\lambda'_1),\ldots,\mathcal{D}_0(\lambda'_n)) 
\in p_n \cap \mathcal{D}_{0,*}^n = p'_n\,.
\end{displaystyle}$ 
\item[6.3] Let $F',G'$ be closed formulas of $[M',\mathcal{L}']$. Then\\
$\begin{displaystyle}
\mathcal{D}_0(\neg F')=\top \Leftrightarrow
\mathcal{D}(\neg F')=\top \Leftrightarrow 
\mathcal{D}(F')=\bot \Leftrightarrow 
\mathcal{D}_0(F')=\bot \,,
\end{displaystyle}$\\
$\begin{displaystyle}
\mathcal{D}_0(\to F' G')=\top \Leftrightarrow
\mathcal{D}(\to F' G')=\top \Leftrightarrow
\end{displaystyle}$ \\
$\begin{displaystyle}
(\mathcal{D}(F') \Rightarrow \mathcal{D}(G')) \Leftrightarrow
(\mathcal{D}_0(F') \Rightarrow \mathcal{D}_0(G'))
\end{displaystyle}$\,, \\
and similarly for $\leftrightarrow$, $\&$ and $\vee$\,.
\item[6.4] For $x \in X$ and every formula 
$H'$ of $[M';\mathcal{L}']$ with $\mbox{free}(H') \subseteq \{x\}$
we can find a formula $H$ of $[M^{(\infty)};\mathcal{L}^{(\infty)}]$
with $H'=\psi(H)$, and we have again $\mbox{free}(H) \subseteq \{x\}$.
By $n \in \setN$ we denote the rank of $\omega_{\exists x H}$,
and we put $\kappa_+=\psi(\omega_{\exists x H}) \in \mathcal{N}'$.
It follows from \eqref{psi} with $F=H$ and $c=\omega_{\exists x H}$ that
\begin{align*}
\mathcal{D}_0\left(\leftrightarrow \, \exists x H' \, H'\frac{\kappa_+}{x}\right)=
\mathcal{D}\left(\leftrightarrow \, \exists x H' \, H'\frac{\kappa_+}{x}\right)=
\top \,,
\end{align*}
hence~
\begin{align*}
\mathcal{D}_0(\exists x \, H')=
\mathcal{D}(\exists x \, H')=
\mathcal{D}(H'\frac{\kappa_+}{x})=
\mathcal{D}_0(H'\frac{\kappa_+}{x})
\in \{\top,\bot\}\,,
\end{align*}
similarly for
$\kappa_-=\psi(\omega_{\exists x \neg H}) \in \mathcal{N}'$:
\begin{align*}
\mathcal{D}\left(\exists x \, \neg H'\right)=
\mathcal{D}\left(\neg H'\frac{\kappa_-}{x}\right)
\in \{\top,\bot\}\,,
\end{align*}
and finally
\begin{align*}
\mathcal{D}_0\left(\forall x \, H'\right)=
\mathcal{D}_0\left(H'\frac{\kappa_-}{x}\right)
\in \{\top,\bot\}\,.
\end{align*}
We conclude that it is equivalent for the evaluation of the closed formulas
$Q x H'$ of $[M';\mathcal{L}']$ with $Q \in \{\exists,\forall\}$ 
to interpret them by using the names either in $\mathcal{N}'$ 
or in $\mathcal{N}$\,, which is in accordance with part (b) of this theorem.
\end{itemize}
(7) Let $F'$ be a formula of $[M';\mathcal{L}']$ with 
$\mbox{free}(F') = \{x_1,\ldots,x_m\}$, $x_1,\ldots,x_m \in X$ 
and $m \geq 0$. We say that $F'$ is valid in $\mathcal{D}_0$ iff
$$
\mathcal{D}_0\left(
F' \frac{\lambda'_1}{x_1}\ldots\frac{\lambda'_m}{x_m}
\right)=\top
$$
for all $\lambda'_1,\ldots,\lambda'_m \in \mathcal{L}'_* $.
This is a definition for the formulation of Condition (8),
and it is equivalent with
$\begin{displaystyle}
\mathcal{D}_0\left(
\forall x_1 \ldots \forall x_m F' 
\right)=\top
\end{displaystyle}$\,.
But we note that $F' \in \Pi(M';\mathcal{L}')$ iff
$\forall x\, F' \in \Pi(M';\mathcal{L}')$
from \cite[(3.11)(a),(3.13)(b)(d)]{Ku}.
Therefore Condition (7) is in accordance with Lemma \ref{correct}.

(8) Let $F' \in B'$, $\mbox{free}(F')=\{x_1,\ldots,x_m\}$
with $m \geq 0$. Then we have $F' \in \tilde{B}^{(n)}$ for some $n \geq 0$,
and $F'$ is valid in $\mathcal{D}$. We obtain
$$\mathcal{D}_0\left(
\forall x_1 \ldots \forall x_m F' 
\right)=
\mathcal{D}\left(
\forall x_1 \ldots \forall x_m F' 
\right)=\top\,,
$$
and from $B \subseteq B'$ we see that $\mathcal{D}_0$ is a model
for $[M;\mathcal{L}]$. 

Finally let $F$ with $\mbox{free}(F)=\{x_1,\ldots,x_m\}$
with $m \geq 0$ be a formula of $[M;\mathcal{L}]$. 
Then $F$ is also a formula of $[M';\mathcal{L}']$,
and we obtain from the definition of $\mathcal{D}_0$ that
\begin{align*}
\mathcal{D}_0\left(
\forall x_1 \ldots \forall x_m F 
\right)=
\mathcal{D}\left(
\forall x_1 \ldots \forall x_m F 
\right) \in \{\top,\bot\}\,.
\end{align*}
Hence $\mathcal{D}$ and $\mathcal{D}_0$ 
are elementarily equivalent models for $[M;\mathcal{L}]$. 
\end{proof}

\begin{rem}

(a) The model $\mathcal{D}_0$ in Theorem \ref{DLST} is also called an 
\textit{elementary countable submodel} of $\mathcal{D}$.

(b) Note that $\mathcal{D}_*$ need not be infinite.
In contrast to the strict inclusions in \eqref{henkin_ketten}
the unions of the sets in \eqref{down_ketten} may terminate for some index $n \in \setN$.

\end{rem}

\section{Application to axiomatic set theory}\label{set_theory}

Axiomatic set theory provides some generally accepted rules for dealing with sets, and it intends to lay a foundation of mathematics by using only the primitive terms ``set" and ``membership".
More gene\-ral mathe\-matical structures are defined using these primitive terms.\\
Axio\-matic set theory should then prove as many relevant theorems about the general structures as possible. Such a commonly accepted foundation of mathe\-ma\-tics is the Zermelo-Fraenkel set theory ZFC with the axiom of choice,
see Jech \cite{J} and Shoenfield \cite[Chapter 9]{Shoenfield}.
ZFC is only dealing with sets whose members are sets again.
A more general approach to set theory also allows ``urelements''
as members of sets and it is studied by Moschovakis \cite{Mosch}.

\noindent
In \cite{Kuset} we have presented a generalization of ZFC, 
starting with a fragment of axiomatic set theory
called RST, for reduced set theory.
As in ZFC we are only dealing with sets whose members are sets again.

\noindent
A set $U$ is called \textit{transitive} iff $Y \subseteq U$ 
for all $Y \in U$.\\
By $\mathcal{P}(Y) =\{ V : V \subseteq Y\}$
we denote the power set of $Y$.\\
Due to \cite[Definition 2.1]{Kuset} 
we say a set $U$ is \textit{subset-friendly} iff 
\begin{itemize}
\item[1.] $\emptyset \in U$\,.
\item[2.] $U$ is transitive\,.
\item[3.] For all $Y \in U$ we have $\mathcal{P}(Y) \in U$\,.
\item[4.] For all $Y, Z \in U$ we have a transitive set\\ 
$V \in U$ with $\{Y,Z\} \subseteq V$\,.
\end{itemize}

\noindent
Now we are listing the six principles 
according to which we are dealing with sets in RST. 
For sets $A$, $B$, $U$, $V$, $Y$ these are given by
\begin{itemize}
\item[P1.] \textit{Principle of extensionality.}
If $A$ and $B$ have the same elements, then $A=B$.
\item[P2.] \textit{Subset principle.}
If $\mathcal{F}$ is a property which may depend on previously given sets, then we can form the subset of $A$ given by
$U = \{Y :\,\mbox{there~holds~} Y \in A 
\mbox{~and~} Y \mbox{~has~property~}\mathcal{F}\}\,.$\\
Especially the empty set $\emptyset$ can be obtained from this principle.
\item[P3.] \textit{Principle of regularity.}
If $U$ is not the empty set,
then we have $Y \in U$ with $U \cap Y=\emptyset$.
\item[P4.] \textit{Principle for pairing of sets.}
If $A$ and $B$ are given, then we can find a set $U$
with $A \in U$ and $B \in U$. 
We can combine this with (P2)
to form $U=\{A, B\}$.
\item[P5.] \textit{Principle for subset-friendly sets.}
If $A$ is given, then we have a subset-friendly set $U$
with $A \in U$.
\item[P6.] \textit{Principle of choice.}
If $U$ has only nonempty and pairwise disjoint elements
then we can find a set $Y$ with the following property:
For every member $A \in U$ there exists exactly one set $V$ with
$Y \cap A = \{V\}$.
\end{itemize}
The novel feature of (P5) is that it contains the set $A$ as 
para\-meter. Hence we can use it step by step.
We will first provide a subset-friendly set $U$ with $A=\emptyset \in U$.
Then we can apply (P5) to $A=U$ again, and so on.
The correctness of (P5) is guaranteed by \cite[Theorem 2.5]{Kuset}.
In this way we have a sufficiently large set as background available. 
Within this set we can perform the set operations
listed in \cite[Remark 4.11]{Kuset}. Then we apply the subset axioms directly instead of the replacement axioms.

To formulate the axioms (A1)-(A6) in \cite[Section 4]{Kuset}
corresponding to (P1)-(P6) we have used 
the formal language of the predicate calculus. 
The language of set theory consists of a set
$X=\{\,{\bf x_1}\,,\,{\bf x_2}\,,\,{\bf x_3}\,,\, \ldots \,\}$
of variables, the equality sign $\sim$ and a binary predicate symbol
$\in$ for membership relation. Using variables $x,y \in X$
we start with atomic formulas $\sim x,y$ and $\in x,y$.
Let formulas $F$, $G$ be constructed pre\-viously.
Then we can form step by step the connectives 
$$
\neg F\,, \quad \to F G\,, \quad \& F G\,, \quad \vee F G\,, \quad
\leftrightarrow F G
$$
and the formulas
$\begin{displaystyle}
\forall x\, F\,, ~ \exists x\,F
\end{displaystyle}$\,.
Note that \cite[Section 3]{Ku} does not provide a restriction to binary prime formulas $\in x,y$. But that doesn't matter because we
could use \cite[Section 3.4]{Ku2} to eliminate non-binary prime formulas 
with the symbol $\in$ from the formal proofs. Therefore, in \cite{Kuset} we use this restriction on the RST-formulas from the beginning.

In \cite[Theorem 5.1]{Kuset} we obtained a hierarchy of transitive 
models for RST. The universe of each model is a 
subset-friendly set $\mathcal{U}_n$, and the membership relation in each model is the true membership relation between the individuals 
in the universe $\mathcal{U}_n$. These are only the simplest transitive models. 
All of them have only finite or countably infinite ordinals. 
The models for RST from \cite[Theorem 5.1]{Kuset} without uncountable ordinals 
violate Zermelo's well-ordering theorem.
Zermelo's well-ordering theorem states that for every set $A$ 
there is a bijective mapping from a von Neumann ordinal to $A$.
\footnote{Nevertheless, any set can be well-ordered within the frame of RST; see (P6).}
We mention two reasons for studying RST.
The first reason is that RST admits transitive models
which can be extended by adding step by step appropriate new axioms to RST.
Then the former transitive model just becomes a transitive set
and a member of the extended model. In this way we can extend RST
and its transitive models, whenever this is needed.
On the other hand, we have seen in \cite{Kuset} that even the simplest
models of RST are large and rich enough in order to formalize
most parts of classical mathema\-tics. Hence we can also study
axiomatic set theory within the theory of models, 
using universal sets instead of proper classes.

As a second reason, we express the following reservation
about the replacement axioms:
Adding the replacement axioms to RST we obtain an axiomatic system
which is equivalent to ZFC. In this system we can construct the 
von Neumann hierarchy of sets as follows:

We use the notation $\alpha<\beta$
in order to indicate that $\alpha$ and $\beta$ are
(von Neumann) ordinals with $\alpha \in \beta$,
%, and similarly $\alpha \leq \beta$ for $\alpha \in \beta$ or $\alpha=\beta$. 
and we put $\alpha+1=\alpha \cup \{\alpha\}$,
$V_{\emptyset}=\emptyset$ as well as
$V_{\alpha+1}=\mathcal{P}(V_{\alpha})$,
$V_{\beta}=\bigcup_{\alpha<\beta} V_{\alpha}$
for all ordinals $\alpha$ and for all limit ordinals $\beta$.
Then for every given set $A$ we have 
an ordinal $\alpha$ with $A \in V_{\alpha}$.
Now
$
V_{\omega_1} 
$
with $\omega_1 = \{\alpha\,:\, \alpha \mbox{ is a countable ordinal}\,\}$
turned out to be a model for RST, and we obtained

\begin{thm}\label{finalthm2} \cite[Theorem 5.6]{Kuset}:
We have a countable transitive set $\mathcal{U} \in V_{\omega_1}$ such
that the true binary membership relation 
$$
E=\{(A,B)\,:\, A \in B \mbox{~and~}
 A \in \mathcal{U} \mbox{~and~}  B \in \mathcal{U} \,\}
$$
makes $\mathcal{U}$ a model for RST 
which is elementarily equivalent to $V_{\omega_1}$.
\end{thm}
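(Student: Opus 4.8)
The plan is to obtain $\mathcal{U}$ by combining the downward L\"owenheim-Skolem Theorem \ref{DLST} with a Mostowski collapse. First I would take $\mathcal{D}=V_{\omega_1}$ as the given model of RST and observe that RST is a countable formal mathematical system in the sense of Section \ref{LS}: its only predicate symbols are $\sim$ and $\in$, there are no operation symbols, and the axiom schemes (P1)--(P6) yield only countably many basis axioms. Hence Theorem \ref{DLST} applies and produces a countable model $\mathcal{D}_0$ that is elementarily equivalent to $V_{\omega_1}$, whose universe $\mathcal{D}_{0,*}$ is a countable subset of $V_{\omega_1}$ and whose membership predicate is the restriction $E_0=\{(A,B)\in\mathcal{D}_{0,*}^2\,:\,A\in B\}$ of the true membership relation, exactly as in part (c) of Theorem \ref{DLST}.

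The difficulty is that $\mathcal{D}_{0,*}$ need not be transitive, so it is not yet of the required form. To repair this I would apply the Mostowski collapse to the structure $(\mathcal{D}_{0,*},E_0)$. Two hypotheses must be checked. First, $E_0$ is well-founded, which is inherited from the true membership relation on $V_{\omega_1}$, a set of the cumulative hierarchy satisfying regularity. Second, $E_0$ is extensional on $\mathcal{D}_{0,*}$; this is the key point and follows because $\mathcal{D}_0$, being elementarily equivalent to $V_{\omega_1}$, validates the extensionality principle (P1). Granting these, the collapse map defined recursively by $\pi(a)=\{\pi(b)\,:\,b\in a,\ b\in\mathcal{D}_{0,*}\}$ is an $\in$-isomorphism of $(\mathcal{D}_{0,*},E_0)$ onto a transitive set $\mathcal{U}$.

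It then remains to read off the conclusion. Since $\pi$ is a bijection and $\mathcal{D}_{0,*}$ is countable, $\mathcal{U}$ is a countable transitive set, and on it the true membership relation $E$ of the statement coincides with $\in$. Because $\pi$ is an isomorphism of the membership structures, every RST-formula has the same truth value in $(\mathcal{U},E)$ as in $(\mathcal{D}_{0,*},E_0)$, so $(\mathcal{U},E)$ is a model of RST that is elementarily equivalent to $\mathcal{D}_0$ and hence, by transitivity of elementary equivalence, to $V_{\omega_1}$. Finally $\mathcal{U}\in V_{\omega_1}$: a countable transitive set has rank some countable ordinal $\alpha<\omega_1$, whence $\mathcal{U}\in V_{\alpha+1}\subseteq V_{\omega_1}$. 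The main obstacle is verifying the hypotheses of the collapse---above all that extensionality transfers to the submodel, so that $\pi$ is injective---together with confirming that the collapse preserves satisfaction of every RST-formula, which is where the requirement that both models use the true (restricted) membership relation becomes essential.
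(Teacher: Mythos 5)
Your proposal is correct and follows essentially the same route as the paper's proof of the generalization (Theorem \ref{finalthmext}, of which Theorem \ref{finalthm2} is the case $\beta=\omega_1$): apply the downward L\"owenheim-Skolem Theorem \ref{DLST} to $V_{\omega_1}$, deduce extensionality of the countable submodel from (P1), perform a Mostowski collapse onto a countable transitive set $\mathcal{U}$, and conclude $\mathcal{U}\in V_{\omega_1}$. The only divergence is in that last step, where you invoke the standard fact that a countable transitive set has countable rank $\alpha<\omega_1$, whereas the paper establishes $\mathcal{U}\subseteq V_{\omega_1}$ by an $\in$-induction through the collapsing map combined with Lemma \ref{omegalem}; both arguments are sound.
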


Next we derive an important generalization of Theorem \ref{finalthm2},
and we use the same ingredients for its proof. Here we present
again \cite[Lemma 5.5]{Kuset} for the sake of completeness:
 
\begin{lem}\label{omegalem}
Let $(A_k)_{k \in \setN}$ be a sequence of sets 
with $A_k \in V_{\omega_1}$ for all $k \in \setN$.
Then we have
$\begin{displaystyle}
\{ A_k\,:\,k \in \setN\} \in V_{\omega_1}\,.
\end{displaystyle}$
\end{lem}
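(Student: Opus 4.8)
The plan is to exploit the monotonicity of the von Neumann hierarchy together with the fact that $\omega_1$ has uncountable cofinality, so that a countable supremum of countable ordinals remains strictly below $\omega_1$. The whole argument is a rank computation: I assign to each $A_k$ a countable bound, take the supremum of these bounds, and check that this supremum is still a countable ordinal.

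First I would recall the standard monotonicity property $V_\alpha \subseteq V_\gamma$ whenever $\alpha \le \gamma$, which follows by transfinite induction from $V_{\alpha+1}=\mathcal{P}[V_\alpha]$ and from $V_\beta=\bigcup_{\alpha<\beta}V_\alpha$ at limit stages. Since $V_{\omega_1}=\bigcup_{\alpha<\omega_1}V_\alpha$, each hypothesis $A_k \in V_{\omega_1}$ yields a countable ordinal $\alpha_k<\omega_1$ with $A_k \in V_{\alpha_k}$. I then set $\beta=\sup_{k\in\setN}\alpha_k=\bigcup_{k\in\setN}\alpha_k$. By monotonicity $A_k\in V_{\alpha_k}\subseteq V_\beta$ for every $k$, so $\{A_k:k\in\setN\}\subseteq V_\beta$; hence $\{A_k:k\in\setN\}\in\mathcal{P}[V_\beta]=V_{\beta+1}$. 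Provided $\beta<\omega_1$, we get $\beta+1<\omega_1$ and therefore $V_{\beta+1}\subseteq V_{\omega_1}$, giving $\{A_k:k\in\setN\}\in V_{\omega_1}$ as desired.

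The hard part is exactly the claim that $\beta$ is countable, i.e. that a countable supremum of countable ordinals stays below $\omega_1$. This is the regularity of $\omega_1$ and rests on the axiom of choice (a countable union of countable sets is countable), which is available here since RST together with the replacement axioms is equivalent to ZFC and the choice principle (P6) is part of RST. Concretely, writing $\beta=\bigcup_{k\in\setN}\alpha_k$ exhibits $\beta$ as a union of countably many countable sets, hence $\beta$ is countable and so $\beta\in\omega_1$. Everything else is routine bookkeeping with the $V$-hierarchy, so once this regularity fact is in place the proof closes immediately.
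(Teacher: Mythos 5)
Your proof is correct and follows essentially the same route as the paper: pick a countable rank $\alpha_k$ for each $A_k$, form the countable supremum, and place the set $\{A_k : k\in\setN\}$ into the next power-set stage. The only cosmetic difference is that you finish via the monotonicity $V_{\beta+1}\subseteq V_{\omega_1}$ while the paper invokes the transitivity of $V_{\omega_1}$ after noting $V_{\alpha+1}\in V_{\omega_1}$; you also spell out the regularity of $\omega_1$, which the paper leaves implicit.
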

\begin{proof}
We put $\mathcal{A}=\{ A_k\,:\,k \in \setN\}$. 
It follows from the definition of $V_{\omega_1}$ that 
for all $k \in \setN$ we have a countable ordinal $\alpha_k$ 
with $A_k \in V_{\alpha_k}$. Hence we can form the 
countable ordinal 
$\begin{displaystyle}
\alpha = \bigcup_{k \in \setN} \alpha_k \in \omega_1
\end{displaystyle}$ and see that
$
\mathcal{A} \subseteq V_{\alpha} \in V_{\omega_1}\,.
$
Since $\mathcal{A} \in \mathcal{P}(V_{\alpha})=V_{\alpha+1} \in V_{\omega_1}$, we obtain the desired result from the transitivity of $V_{\omega_1}$.
\end{proof}

Theorem \ref{finalthm2} has the following natural extension:

\begin{thm}\label{finalthmext} 
Let $\beta$ be an ordinal 
 %with $\omega_1 \leq \kappa$
and assume that $V_{\beta}$ is a model of RST.
Then we have a countable transitive set $\mathcal{U} \in V_{\omega_1}$ such
that the true binary membership relation 
$$
E=\{(A,B)\,:\, A \in B \mbox{~and~}
 A \in \mathcal{U} \mbox{~and~}  B \in \mathcal{U} \,\}
$$
makes $\mathcal{U}$ a model for RST 
which is elementarily equivalent to $V_{\beta}$.
\end{thm}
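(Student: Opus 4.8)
The plan is to apply the downward L\"owenheim--Skolem Theorem \ref{DLST} to the model $V_{\beta}$ and then pass to the Mostowski collapse of the resulting countable elementary submodel. First I would view $V_{\beta}$, with the true membership relation interpreting $\in$ and true equality interpreting $\sim$, as a model $\mathcal{D}$ of the formal system RST in the sense of Section \ref{models}; RST has a single binary predicate symbol and only the countably many axioms obtained from the schema (P2) together with (P1),(P3)--(P6), so it is countable and Theorem \ref{DLST} applies. That theorem yields a countable elementary submodel $\mathcal{D}_0$ whose universe $\mathcal{D}_{0,*}$ is a countable subset of $V_{\beta}$; by the construction in part (c) of Theorem \ref{DLST} the predicate $\in$ is interpreted in $\mathcal{D}_0$ as the restriction $E_0 = {\in}\cap\mathcal{D}_{0,*}^{2}$ of the true membership relation. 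Since $\mathcal{D}_0$ is elementarily equivalent to $V_{\beta}$ and $V_{\beta}$ is a model of RST, the structure $(\mathcal{D}_{0,*},E_0)$ is itself a model of RST.

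Next I would apply the Mostowski collapse to $(\mathcal{D}_{0,*},E_0)$. The relation $E_0$ is well-founded as a restriction of the true membership relation, and it is extensional because $\mathcal{D}_0$ validates the extensionality principle (P1): reading that axiom in $\mathcal{D}_0$, where by Condition (6.1) the symbol $\sim$ is interpreted as genuine equality of individuals, says precisely that two elements of $\mathcal{D}_{0,*}$ with the same $E_0$-predecessors coincide. Hence there is a unique transitive set $\mathcal{U}$ and a collapsing isomorphism $\pi:(\mathcal{D}_{0,*},E_0)\to(\mathcal{U},\in)$. As an isomorphism of structures in the language $\{\in,\sim\}$, the map $\pi$ preserves the truth of every formula, so $(\mathcal{U},\in)$ is a model of RST that is elementarily equivalent to $(\mathcal{D}_{0,*},E_0)$, and therefore to $V_{\beta}$. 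Because $\mathcal{D}_{0,*}$ is countable, $\mathcal{U}$ is a countable transitive set, and the membership relation $E$ of the statement is exactly $\in$ restricted to $\mathcal{U}$.

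It remains to show $\mathcal{U}\in V_{\omega_1}$, and the crucial point is that this holds for every countable transitive set, irrespective of the size of $\beta$. I would prove $\pi(a)\in V_{\omega_1}$ for all $a\in\mathcal{D}_{0,*}$ by induction along the well-founded relation $E_0$: one has $\pi(a)=\{\pi(b):b\in\mathcal{D}_{0,*},\,b\,E_0\,a\}$, the index set is a subset of the countable set $\mathcal{D}_{0,*}$, and each $\pi(b)$ lies in $V_{\omega_1}$ by the inductive hypothesis, so Lemma \ref{omegalem} gives $\pi(a)\in V_{\omega_1}$. Applying Lemma \ref{omegalem} once more to the countable family $\{\pi(a):a\in\mathcal{D}_{0,*}\}$ yields $\mathcal{U}\in V_{\omega_1}$. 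This mirrors the proof of Theorem \ref{finalthm2}, which is recovered as the case $\beta=\omega_1$; the only change for general $\beta$ is that the starting model is $V_{\beta}$ rather than $V_{\omega_1}$, while the collapse invariably lands back inside $V_{\omega_1}$.

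The main obstacle is not in any single step but in correctly matching the abstract model-theoretic apparatus of Section \ref{models} with the concrete set-theoretic collapse. Specifically, one must verify that the interpretation of $\in$ in the L\"owenheim--Skolem submodel really is the genuine membership relation restricted to $\mathcal{D}_{0,*}$ (so that $E_0$ is automatically well-founded from Regularity in the ambient theory) and that equality in $\mathcal{D}_0$ is genuine equality of individuals (so that the validity of (P1) in $\mathcal{D}_0$ furnishes exactly the extensionality hypothesis needed for the Mostowski collapse). Once these two identifications are secured, both the collapse and the membership argument via Lemma \ref{omegalem} are routine.
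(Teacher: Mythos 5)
Your proposal is correct and follows essentially the same route as the paper: apply the downward L\"owenheim--Skolem Theorem \ref{DLST} to $V_{\beta}$, use the validity of (P1) to get extensionality of the countable submodel, take the Mostowski collapse onto a transitive set $\mathcal{U}$, and show $\mathcal{U}\in V_{\omega_1}$ by a well-founded induction combined with Lemma \ref{omegalem}. The only cosmetic difference is that you run the induction along $E_0$ on $\mathcal{D}_{0,*}$ directly, whereas the paper extends the collapsing map to a function $\tilde{f}$ on all of $V_{\beta}$ and uses $\in$-induction there; the two arguments are equivalent.
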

\begin{proof} The downward L\"owenheim-Skolem Theorem \ref{DLST}
gives an elementary equivalent countable submodel 
$\mathcal{U}' \subseteq V_{\beta}$ for RST.
Due to (P1) the set $\mathcal{U}'$ is extensional in the sense of 
\cite[Definition 6.14]{J}, i.e. for any two distinct sets
$A, B \in \mathcal{U}'$ we have 
$A \cap \mathcal{U}' \neq B \cap \mathcal{U}'$.
Mostowski's Collapsing Theorem \cite[Theorem 6.15(ii)]{J}
gives a transitive set $\mathcal{U}$ and
an isomorphism $f : \mathcal{U}' \to \mathcal{U}$
such that $A \in B \Leftrightarrow f(A) \in f(B)$ 
for all $A, B \in \mathcal{U}'$. We see that $f$ preserves elementary equivalence. 

It remains to prove that $\mathcal{U} \in V_{\omega_1}$.
Due to Lemma \ref{omegalem} it is sufficient to show that
$\mathcal{U} \subseteq V_{\omega_1}$. 
We define $\tilde{f} : V_{\beta} \to \mathcal{U}$ by
\begin{equation*}
	\tilde{f}(A) =\begin{cases}
	f(A) &\text{if}~ A \in \mathcal{U}',\\
	\emptyset &\text{otherwise}.
	\end{cases}
	\end{equation*}
We say that a set $A \in V_{\beta}$ has property
$\Phi(A)$ iff 
$
\tilde{f}(A) \in V_{\omega_1}\,.
$

Let $A \in V_{\beta}$ and assume $\Phi(B)$ for all $B \in A$.
If $A \notin \mathcal{U}'$ then $\tilde{f}(A)=\emptyset \in V_{\omega_1}$
and $\Phi(A)$. Otherwise we have $A \in \mathcal{U}'$ and
$
\tilde{f}(A) = f(A)=\{f(B) : B \in A \cap \mathcal{U}'\}\,.
$
For every $C \in \tilde{f}(A)$ we can find 
$B \in A \cap \mathcal{U}'$ with $C=f(B)=\tilde{f}(B)$, and we 
obtain $C \in V_{\omega_1}$ from $\Phi(B)$. 
We see that $\tilde{f}(A) \subseteq V_{\omega_1}$ and
have $\tilde{f}(A) \in V_{\omega_1}$ from Lemma \ref{omegalem} since
$\tilde{f}(A)$ is countable.
Hence there holds $\Phi(A)$ 
whenever there holds $\Phi(B)$ for every $B \in A$.
We also have $\tilde{f}(\emptyset)=\emptyset$ and
obtain $\Phi(\emptyset)$ from $\emptyset \in V_{\omega_1}$.
Finally we see by $\in$-induction from \cite[Theorem 6.4]{J} 
that $\Phi(A)$ is valid for all $A \in V_{\beta}$.
Since the image of $\tilde{f}$ and $f$ 
is the countable transitive set $\mathcal{U}$,
we conclude that $\mathcal{U} \subseteq V_{\omega_1}$.
\end{proof}

Let $\beta$ be an ordinal with a universal set $V_{\beta}$ for RST
and let $\mathcal{U} \in V_{\omega_1}$ be the corresponding 
countable transitive model for RST from Theorem \ref{finalthmext}. 
Then both models $\mathcal{U}$ and $V_{\beta}$ for RST 
satisfy exactly the same statements, 
expressed in the formal language of set theory.\\

We make use of the notation from ordinal arithmetic and consider two cases: 
In the first case let $\beta < \omega_1$ be a ``small" countable ordinal like
$\beta = \omega \cdot \omega$, such that
$\mathcal{U}$ contains the same ordinals like $V_{\beta}$. 
In this case we have $\mathcal{U} \notin V_{\beta}$.\\

In the second case let $\omega_1 < \beta$, 
for example with $\beta = \omega_{1} + (\omega \cdot \omega)$. 
But here we have $\mathcal{U} \in V_{\omega_1} \in V_{\beta}$
with three transitive models for RST,
where $\mathcal{U}$ and $V_{\beta}$
give elementary equivalent models. \\

Concerning the second case 
I would like to see a good reason 
why the use of uncountable ordinals like $\omega_1$ 
is not in conflict with the principle (P3) of regularity.
Therefore we proclaim a moderate parsimony principle 
in the choice of mathematical tools, 
instead of risky maximality properties 
for a single universe of sets.

\end{document}